\tikzstyle{every node}=[font=\small]
\tikzstyle{every path}=[line width=0.7pt,line cap=round,line join=round]
\tikzstyle{block} = [draw, fill=blue!10, rectangle, minimum height=4em, minimum width=8em]
\tikzstyle{smallblock} = [draw, fill=blue!10, rectangle, minimum height=2em, minimum width=2em]
\tikzstyle{sum} = [draw, fill=blue!10, circle, node distance=1cm]
\tikzstyle{input} = [coordinate]
\tikzstyle{output} = [coordinate]
\tikzstyle{pinstyle} = [pin edge={to-,thin,black}]
\DeclareMathAlphabet{\mathcal}{OMS}{cmsy}{m}{n}
\SetMathAlphabet{\mathcal}{bold}{OMS}{cmsy}{b}{n}
\title{\LARGE \bf
Steady state characterization and frequency synchronization of a multi-converter power system  on high-order manifold}
\author{Taouba Jouini$^{1}$, Zhiyong Sun$^{2}$ 
\thanks{*This work has received funding from the European Research Council (ERC) under the European Union's Horizon 2020 research and innovation program (grant agreement No: 834142) and from the European Union's Horizon 2020 research and innovation program under grant agreement No: 691800 and ETH Z\"urich funds. }%
	\thanks{$^{1}$Taouba Jouini is with the Department of Automatic Control, LTH, Lund University,
		Ole Römers väg 1,  22363 Lund, Sweden. 
				$^{2}$Zhiyong Sun is with Department of Electrical Engineering,  Eindhoven University of Technology, the Netherlands.
		E-mails:
		\tt\small taouba.jouini@control.lth.se, z.sun@tue.nl.}}%
\newcommand\scalemath[2]{\scalebox{#1}{\mbox{\ensuremath{\displaystyle #2}}}}
\newcommand\oprocendsymbol{\hbox{$\blacksquare$}}
\newcommand\oprocend{\relax\ifmmode\else\unskip\hfill\fi\oprocendsymbol}
\newcommand{\real}[0]{\mathbb R}
\providecommand{\norm}[1]{\lVert#1\rVert}
\newcommand{\dd}[0]{\mathrm d}
\newtheorem{theorem}{Theorem}[section]
\newtheorem{lemma}[theorem]{Lemma}
\newtheorem{definition}[theorem]{Definition}
\newtheorem{proposition}[theorem]{Proposition}
\newtheorem{remark}[]{Remark}
\newtheorem{assumption}[]{Assumption}
\newcommand{\cir}[1]{{\textcircled{\raisebox{-0.9pt}{#1}}}}
\newcommand{\tb}[0]{\color{black}}
\newcommand\rout{\bgroup\markoverwith{\textcolor{red}{/}}\ULon} 
\begin{document}

\maketitle
\thispagestyle{empty}
\pagestyle{empty}

\begin{abstract}
We investigate the stability properties of a multi-converter power system model, defined on a high-order manifold. For this, we identify its symmetry (i.e., rotational invariance) generated by a static angle shift and rotation of AC signals. We characterize the steady state set, primarily determined by the steady state angles and DC power input. Based {\tb on eigenvalue conditions} of its Jacobian matrix, {\tb we show asymptotic stability of the multi-converter system in a neighborhood of the synchronous steady state set by applying the center manifold theory. We guarantee the eigenvalue conditions via an explicit approach.}
Finally, we demonstrate our results based on a numerical example involving {\tb a network of} identical DC/AC converter systems.
\end{abstract}


\section{INTRODUCTION}
 Electricity production is one of the largest sources of greenhouse gas emissions in the world. Carbon-free electricity will be critical for keeping the average global temperature within the United Nation's target and avoiding the worst effects of climate change \cite{ieeespectrum}. 
 Prompted by these environmental concerns, the electrical grid has witnessed a major shift in power generation from conventional (coal, oil) into renewable (wind, solar) sources. The massive deployment of distributed, renewable generation had an elementary effect on its operation via power electronics converters interfacing the grid, deemed as game changers of the conventional analysis methods of power system stability and control.

{\em Literature review}: Modeling and stability analysis in power system networks is conducted as a matter of perspective from two different angles. First, network perspective suggests an {\em up to bottom} approach, where DC/AC converter dynamics are regarded as controllable voltage sources and voltage control is directly accessible. The most prominent example is droop control that leads to the study of second-order pendulum dynamics, emulating the Swing equation of synchronous machines \cite{kundur1994power}, which resembles the celebrated Kuramoto-oscillator \cite{dorfler2012synchronization}. The analogy drawn between the two models has motivated a vast body of literature that harness the results available for synchronization on the circle via Kuramoto oscillators to analyze the synchronization in power systems.
Second, a {\em bottom to up} approach {\tb derives} DC/AC converter {\tb models} from first-order principles, where the dynamics governing DC/AC converters are derived from the circuitry of DC-and AC-sides and the intermediate switching block, which can structurally match that of synchronous machines \cite{arghir2018grid}. Recently, the matching control  has been proposed {\tb in \cite{arghir2018grid}} as a promising control strategy, which achieves {\tb a} structural equivalence of the two models, and endows the closed-loop system with advantageous features (droop properties, power sharing, etc.). By augmenting the system dynamics with a virtual angle, the frequency is set to be proportional to DC-side voltage deviations, constituting a measure of power imbalance in the grid. This leads to the derivation of higher-order models that describe a network of coupled DC/AC converters on nonlinear manifolds with higher order than the circle.

Similar to the physical world, where the laws governing interactions in a set of particles are invariant with respect to static translations and rotations of the whole rigid body \cite{sarlette2009geometry}, power system trajectories are invariant under a static shift in their angles, or said to possess a {\em rotational invariance}. The symmetry of the vector field describing the power system dynamics, indicates the existence of a continuum of steady states for the multi-converter (with suitable control that induces/preserves angle symmetry) or multi-machine dynamics. In particular, the rotational invariance is the topological consequence of the absence of a reference frame or absolute angle in power systems and regarded thus far as a fundamental obstacle for defining suitable error coordinates. To alleviate this, a common approach in the literature is to perform transformations either resulting from projecting into the orthogonal complement, if the steady state {\tb set} is a linear subspace \cite{schiffer2019global}, or grounding a node \cite{tegling2015price}, where classical stability tools such as Lyapunov direct method can be deployed. 

{\tb To analyze power system stability, different conditions
have been proposed. In  \cite {arghir2018grid} and \cite{caliskan2014compositional}, sufficient stability conditions
are obtained for a single-machine/converter connected to a load. In \cite{dorfler2012synchronization}, a sufficient algebraic stability condition connects the synchronization of power systems
with network connectivity and power system parameters. Although
these conditions give qualitative insights into the sensitivities
influencing stability, they usually require strong and often
unrealistic assumptions. For example, the underlying models are of reduced order (mostly first or second order) \cite{johnson2013synchronization, dorfler2012synchronization}.
Reduced-order systems, where one infers stability of the whole
system from looking at only a subset of variables, are not a
truthful representation of the full-order system if important
assumptions are not met \cite{khalil2002nonlinear}. Some stability conditions are
valid only in radial networks \cite{schiffer2019global}. Moreover, explicit stability conditions require assumptions like strong mechanical or DC-side
damping \cite{dorfler2012synchronization}, whereas implicit conditions are based on semi-definite programming and thus not very insightful \cite{vu2015lyapunov}. 
}

{
{\em Contributions}:
In this work, we ask in essence two fundamental questions: 
   i) Under mild assumptions on input feasibility, how can we describe the behavior of the steady state trajectories of the nonlinear power system, in closed-loop with a suitable control, that induces/preserves the symmetry, e.g. the matching control \cite{arghir2018grid, 9304344}? 
   ii) Based on the properties of the steady state manifold, can we ensure local {\tb stability}, i.e., synchronization?
   
 To answer the first question, we study the behavior of the steady state manifold. For this, we derive a steady state map, which embeds {known} steady state angles {into} the DC power input as a function of the network topology and converter parameters. We show that the steady state angles fully describe the steady state behavior and determine all the other states. The steady state map depends on network topology, which is known to play a crucial role in the synchronization of power systems \cite{sarlette2009geometry, schiffer2019global}. 
     Since the vector field exhibits symmetry with respect to translation and rotation actions, i.e., under a shift in all angles and a rotation in all AC signals, the steady state manifold inherits the same property and every steady state trajectory is invariant under the same actions. This allows us to define set of equilibria that are generated under these actions.
    In this manner, we gain an overall perspective of the behavior characterizing the steady state set of the power system model. 
   
 We address the second question by showing {\tb asymptotic stability} of the nonlinear trajectories confined to a neighborhood to the steady state set of interest. 
{\tb For this,  we study the stability of the nonlinear dynamics  as a direct application of the center manifold theory to the multi-converter power system. We assume that the eigenvalues of the Jacobian evaluated at a point on the synchronous steady state set can be split into one zero mode and the remainder with real part confined to the left half-plane. Accordingly, we then decompose the nonlinear dynamics into two subsystems, whose dynamics are zero and Hurwitz, respectively. This allows to define a center manifold upon modal transformation, where we use the reduction principle \cite[p.195]{wiggins1990introduction} to deduce the stability of the trajectories of the multi-converter system from the dynamics evolving on the center manifold. The point-wise application of the center manifold theory allows to construct a neighborhood of the steady state set of interest and thereby showing its local asymptotic stability.
}

{\tb To satisfy the Jacobian eigenvalue condition in an explicit way}, we study the linearized system trajectories and pursue a parametric linear stability analysis approach at a frequency synchronous steady state. Towards this, we develop a novel stability analysis  for a class of partitioned linear systems characterized by a stable subsystem and a one-dimensional invariant subspace. We propose a {new} class of Lyapunov functions characterized by an  oblique projection onto the complement of the invariant subspace,  where the inner product is taken with respect to a matrix to be chosen as solution to Lyapunov and $\mathcal{H}_\infty$ Riccati equations.
Our approach has natural cross-links with analysis concepts for interconnected systems, e.g., small-gain theorem systems. For the multi-source power system model, we arrive at explicit stability conditions that depend only on the converter's parameters and steady-state values. In accordance with other works, our conditions require sufficient DC-side and AC-side damping.

}

{\em Paper organization} The paper unfurls as follows: Section \ref{sec: setup} presents the model setup based on a high-fidelity nonlinear power system model. Section \ref{sec: eq-manifold} studies the symmetry of its vector field,  characterizes {\tb the steady state set of interest}.
Section \ref{sec: local-asymptotic stability} studies local asymptotic {\tb stability} of the nonlinear power system model. {Section \ref{sec: sys-linea} shows asymptotic stability of the linearized power system dynamics and provides interpretations of our results.}
Finally, Section \ref{sec: sims} exemplifies our theory via simulations in two test cases.

{\em Notation}:
Define an undirected graph $\mathbb{G}=(\mathcal{V}, \mathcal{E})$, where $\mathcal{V}$ is the set of nodes with $\vert\mathcal{V}\vert =n$ and $\mathcal{E}\subseteq \mathcal{V}\times \mathcal{V}$ is the set of interconnected edges with $\vert \mathcal{E}\vert=m$. We assume that the topology specified by $\mathcal{E}$ is arbitrary and define the map $\mathcal{E} \to \mathcal{V}$, which associates each oriented edge $e_{ij}=(i,j) \in \mathcal{E}$ to an element from the subset $ \mathcal{I}=\{-1,0,1\}^{|\mathcal{V}|}$,  
resulting in the incidence matrix $\mathcal{B}\in\mathbb{R}^{n\times m}$.
We denote the identity matrix $I=\left[\begin{smallmatrix}
1& 0\\ 0& 1
\end{smallmatrix}\right]$, and $\mathbf{I}$ the identity matrix of {\tb suitable dimension} $p\in\mathbb{N}$, and $\mathbf{J}= \mathbf{I} \otimes \tb J_2$ with $J_2=\left[\begin{smallmatrix}
0& -1\\ 1& 0
\end{smallmatrix}\right]$.
We define the rotation matrix $R(\gamma)=\left[\begin{smallmatrix}
\cos(\gamma) & -\sin(\gamma) \\ \sin(\gamma) &\cos(\gamma) 
\end{smallmatrix}\right]$
and $\textbf{R}(\gamma)= \mathbf{I}\otimes R(\gamma)$. Let $\textrm{diag}(v)$ denote a diagonal matrix, whose diagonals are elements of the vector $v$ and $\mathrm{Rot}(\gamma)=\text{diag}(r(\gamma_{k})),\; k=1\dots n$, with $r(\gamma_k)=\begin{bmatrix} -\sin(\gamma_k) & \cos(\gamma_k)\end{bmatrix}^\top$. 
Let $\mathds{1}_n$ be the $n$-dimensional vector with all entries being one and $\mathbb{T}^n=\mathbb{S}^1\times \dots \times \mathbb{S}^1$ the $n$- dimensional torus. 
We denote by $d(\cdot, \cdot)$ be a distance metric. 
Given a set $\mathcal{A}\subseteq\mathds{R}^n$, then $d(z, \mathcal{A})=\inf\limits_{x\in \mathcal{A}} d(z,x)$ and $T_z\,\mathcal{A}$ is the tangent space of $\mathcal{A}$ at $z$.
Given a vector $v\in\real^n$, we denote {\tb by $v^{\perp}$ its orthogonal complement}, $v_k$ its $k$-th entry and $\oplus$ is the direct sum. For a matrix $A$, let $\norm{A}_2=\overline\sigma(A)$ denote its 2-norm and $\overline\sigma(A)$ denote its maximum singular value.
{\tb For convenience, we denote by $J_f(x^*)=\frac{\partial f(x)}{\partial x}\big\vert_{x=x^*}$ the Jacobian of $f$.}

\section{Power System model in closed-loop with matching control}
\label{sec: setup}
\subsection{Multi-source power system dynamics}
\label{subsec: model} 
\begin{figure}[h!]
\begin{minipage}{\columnwidth}
\centering
\begin{center}
\resizebox{8.5cm}{3cm}{
\begin{circuitikz}[american voltages]
\draw
(0,0) to [short, *-] (3,0)
(3.9,3) to [short, i>=$i_{net,k}$] (5,3)
(3.9,0) to [short, *-] (5,0)
(3,0) to (3.9,0)
(3,3) to (3.9,3)
(0.7,0) to [open, v^<=$v_{x,k}$] (0.7,3) 
(0,3) 
to [short,*-, i=$i_{k}$] (0.7,3) 
(0.6,3) to [R, l=$R$] (1.6,3) 
to [L, l=$L$] (2.6,3) 
(2.6,3) to (2.8,3)
to [short,*-] (2.8,2) 
(2.8,1) to [C, l=$C$] (2.8,2) 
(2.8,1) to [short] (2.8,0)
(3.9,0) to [R, l_=$G$] (3.9,3) 
(2.7,3) to (3,3)
(2.95,3) to [open, v^>=${v}_{k}$] (2.95,0); 
\draw
(-2,-0.5) to (-2,3.5)
(-0.8,1.5) node[pigbt] (pigbt){} 
(-2,3.5) to (0,3.5)
(0,3.5) to (0,-0.5)
(-2,-0.5) to (0,-0.5);
\draw
(-2,0) to [short, *-] (-4.3,0)
(-4.3,0) to [short, *-] (-4,0)
(-5.5,0) to (-4.3,0)
(-5.5,3) to (-4.3,3)
(-5.5,0) to [american current source, i>=$i^*_{dc,k}$] (-5.5,3) 
(-4.3,3) to [R,l=$G_{dc}$] (-4.3,0) 
(-3.1,3) to [C, l=$C_{dc}$] (-3.1,0) 
(-2,3) to [short, *-, i<=$i_{x,k}$] (-3,3)
(-4.3,3) to [short, *-] (-3,3)
(-3.1,3) to [short, *-] (-3,3)
(-5.2,3) to [open, v^>=${v}_{dc,k}$] (-5.2,0);
\end{circuitikz}
}
\end{center}
\caption{Circuit diagram of a balanced and averaged three-phase DC/AC converter with $i_{x,k}=\frac{\mu}{2}r^\top(\gamma_k)\,i_{k}$ and $v_{x,k}=\frac{\mu}{2} r(\gamma_k)\, v_{dc,k}$, \tb see e.g., \cite{wittig2009design}.} 
\label{fig:circuit diagram}
\end{minipage}
\end{figure}
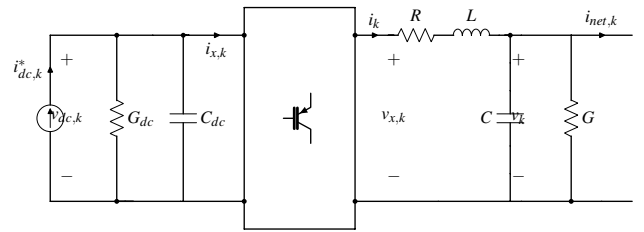

We start from the following general model describing the evolution of the dynamics of $n-$identical {three-phase balanced and averaged} DC/AC converters given in Figure \ref{fig:circuit diagram} in closed-loop with the matching control \cite{arghir2018grid}, a control strategy that renders the closed-loop DC/AC converter structurally similar to a synchronous machine, based on the concept of matching their dynamics; see Section \ref{subsec: contextualizing}. {The converter input $u_k$ is controlled as a sinusoid with constant magnitude $\mu\in]0,1[$ and frequency  $\dot \gamma\in\mathds{R}$ given by the DC voltage deviation.
\begin{subequations}
	\begin{align}
	\dot \gamma_k &= \eta (v_{dc,k}-v_{dc}^*)  \\
	u_k&= \mu \begin{bmatrix}
	-\sin(\gamma_k) \\
	\cos(\gamma_k)
	\end{bmatrix},\, k=1,\dots, n,
	\end{align}%
	\label{eq:matching-ctrl}%
\end{subequations}%
 where $\gamma_k\in\mathbb{S}^1$ is the virtual angle after a transformation into a  $dq$ frame, rotating at the nominal steady state frequency $\omega^*>0$, with angle $\theta_{dq}(t)=\int_0^t \omega^* \,d\tau$ (by the so-called Clark transformation, see \cite{kundur1994power}) and $\eta>0$ is a {\tb control} gain. 
}
 {The converters are interconnected with $m-$identical resistive and inductive lines.} The {closed-loop} converter dynamics are given by the following set of first-order differential equations in $dq$ frame. {\tb For the simplicity of notation, we will drop the subscript $dq$ from all the AC signals.}
\begin{align}
\begin{bmatrix} \dot {\gamma}_k  \\C_{dc} \dot v_{dc,k} \\ L\dot {i}_{k} \\ C\dot {v}_{k}   \end{bmatrix}=  \left[\begin{smallmatrix}
\eta (v_{dc,k}-v_{dc}^*) \\ -K_p (v_{dc,k}-v_{dc}^*)-\frac{\mu}{2} r(\gamma_{k})^{\top}i_{k}\\
-(R\, I+L\, \omega^*\, J )\, i_{k}+ \frac{\mu}{2} r(\gamma_{k}) v_{dc,k}-v_{k}\\
 -(G\, I+C\, \omega^*\,J)\, v_{k} +i_{k}-i_{net,k}
\end{smallmatrix}\right]+ \begin{bmatrix}
0 \\ i^*_{dc,k}\\ 0 \\ 0  
\end{bmatrix},
\label{eq: i-node}
\end{align}

Let the nominal frequency be given by $\omega^*\in\real$ and $v_{dc,k}\in\real$ denote the DC voltage across the DC capacitor with nominal value $v_{dc}^*$. The parameter $C_{dc}>0$ represents the DC capacitance and the  conductance $G_{dc}>0$, together with the proportional control gain $\hat K_p>0$, are represented by $K_p=G_{dc}+\hat K_p>0$. This results from designing a controllable current source $ i_{dc,k}=\hat K_p(v_{dc,k}-v_{dc}^*)+i^*_{dc,k}$, where we denote by $i^*_{dc,k}\in\real$ a constant current source representing DC side input to the converter. Let $i_{k} \in \mathbb{R}^2$ be the inductance current and $v_{k} \in  \mathbb{R}^2$ the output voltage. 
{The modulation amplitude $\mu$, feed-forward current $i_{dc}^*$ {and} the control gain $\widehat K_p$ are regarded as constants usually determined offline or in outer control loops. See \cite{arghir2018grid} for more details.
}
{\tb On the AC side,} the filter resistance and inductance are represented by $R>0$ and $L>0$ respectively. The capacitor $C>0$ is set in parallel with the load conductance $G>0$ to ground and connected to the network via the output current $i_{net,k}\in\real^{2}$.  

{Observe that the closed-loop DC/AC converter dynamics \eqref{eq: i-node} match one-to-one those of a synchronous machine with single-pole pair, non-salient rotor under constant excitation \cite{arghir2018grid}. Thus, all the results derived ahead can conceptually also be applied to synchronous machines (see also our comments in Section \ref{subsec: contextualizing}).
}
 
By lumping the states of $n-$identical converters and $m-$identical  lines and defining the impedance matrices $Z_R=R\; \mathbf{I}+L\,\omega^*\, \mathbf{J}, \, Z_C=G\;  \mathbf{I}+C\,\omega^*\,\mathbf{J},\, Z_\ell=R_\ell \; \mathbf{I}+L_\ell\omega^*\, \mathbf{J}$, we obtain the following power system model, 
\begin{align}
\begin{bmatrix} \dot {\gamma}  \\\dot v_{dc} \\ \dot {i}_{} \\ \dot {v}_{}  \\ \dot {i}_{\ell} \end{bmatrix}= K^{-1} \left[\begin{smallmatrix}
\eta (v_{dc}-v_{dc}^*\mathds{1}_n)\\ -K_p (v_{dc}-v^*_{dc}\mathds{1}_n)-\frac{1}{2}\mu \mathrm{Rot}(\gamma)^{\top}\, i_{}\\
-Z_R\,  i_{}+ \frac{1}{2}\mu \mathrm{Rot}(\gamma) \, v_{dc}-v_{}\\
-Z_C\, {{v}}_{}+i-\mathbf{B}\, i_\ell \\
-Z_\ell\, i_{\ell}+\mathbf{B}^\top\, v_{}
\end{smallmatrix}\right]+K^{-1} \begin{bmatrix}
0 \\ \textbf{u}\\ 0 \\ 0 \\ 0 
\end{bmatrix}\,,
\label{eq: multi-node}
\end{align}
where we define the angle vector $\mathbf{\gamma} =\begin{bmatrix}
\gamma_1, \dots, \gamma_n
\end{bmatrix}^{\top}\in \mathbb{T}^n$, with DC voltage vector ${v_{dc}} =\begin{bmatrix}
v_{dc,1}, \dots, v_{dc,n}
\end{bmatrix}^{\top}\in \mathbb{R}^n$,  the inductance current ${i} =\begin{bmatrix}
i_{1}^\top, \dots, i^\top_{n}
\end{bmatrix}^{\top}\in \mathbb{R}^{2n}$ and output capacitor voltage ${v} =\begin{bmatrix}
v_1^\top, \dots, v_n^\top
\end{bmatrix}^{\top}\in \mathbb{R}^{2n}$. 
The last equation in \eqref{eq: multi-node} describes the line dynamics and in particular, the evolution of the line current ${i_\ell}:=\begin{bmatrix}
i^\top_{\ell_1}, \dots, i^\top_{\ell_m}
\end{bmatrix}^\top \in \mathbb{R}^{2m}$, where $R_{\ell}>0$ is the line resistance, $L_{\ell}>0$ is the line inductance,
$\mathbf{B}=\mathcal{B}\otimes {I}$ and $K=\textrm{diag}\left(\mathbf{I}, C_{dc}\;\mathbf{I}, L\; \mathbf{I} ,C\; \mathbf{I},L_\ell\;\mathbf{I}\right)$. It is noteworthy that, $i_{net}=\mathbf{B}\, i_{\ell}$.
The multi-converter input is represented by $\mathbf{u}=\begin{bmatrix}
i^*_{dc,1}, \dots ,i^*_{dc,n}
\end{bmatrix}^\top\in\mathbb{R}^n$.

Let $N$ be the dimension of the state vector $z=\begin{bmatrix}
\gamma^\top & \tilde v_{dc} ^\top& x^\top
\end{bmatrix}^{\top}$, {where w}e define the relative DC voltage $\tilde {v}_{dc}= v_{dc}-v_{dc}^*\mathds{1}_n $, AC signals $x=\begin{bmatrix}
i^\top & v^\top & i^\top_\ell
\end{bmatrix}^{\top}$ and the input $u=\begin{bmatrix} 0^\top, \textbf{u}^\top, \dots, 0^\top\end{bmatrix}^\top\in~\real^N$ given by the vector \eqref{eq: multi-node}. 

By putting it all together, we arrive at the nonlinear power system dynamics compactly described by,
\begin{align}
\dot z={f}(z,  u),\;  
\label{eq: non-lin}
\end{align}
for all $z\in\mathcal{X}\subseteq\real^N$, where $\mathcal{X}$ is a smooth manifold and ${f}(z,u)$ denotes the vector field \eqref{eq: multi-node}.

\begin{remark}
\tb Without loss of generality, we assume that the DC/AC converters are identical and interconnected via identical RL lines which is a common assumption in the analysis of power system stability, see e.g., \cite{johnson2013synchronization,simpson2013synchronization}. Nonetheless, our analysis carries on to the more general heterogeneous setting, where the converters and the lines can be parameterized differently. See also Section \ref{subsec: contextualizing}.
\end{remark}

\section{Characterization of the steady state set}
\label{sec: eq-manifold}
 \subsection{Steady state map}
\begin{lemma}[Steady state map]
Consider the nonlinear power system model \eqref{eq: non-lin}. Given the steady state angles $\gamma^*$ {\tb satisfying $\dot\gamma^*=0$}. Then, {a feasible input \textbf{u}} {\tb is} given by, 
 \begin{align}
 \label{eq: angles}
   \mathbf{u}={ \xi}\, \text{Rot}(\gamma^*)^\top {Y}\; \text{Rot}(\gamma^*)\;\mathds{1}_n,
 \end{align}
  where {$\xi=\mu^2\frac{v_{dc}^*}{4}>0$} and ${Y}=({Z}_R+({Z}_C+\mathbf{B}\; Z_\ell^{-1} \;\mathbf{B}^\top)^{-1})^{-1}$.
\end{lemma}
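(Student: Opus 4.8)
The plan is to evaluate the equilibrium conditions of the lumped model \eqref{eq: multi-node} and to eliminate the steady-state variables one block-row at a time, proceeding upward from the line dynamics. Since $K=\textrm{diag}(\mathbf{I},C_{dc}\mathbf{I},L\mathbf{I},C\mathbf{I},L_\ell\mathbf{I})$ is invertible, imposing $\dot z=0$ in \eqref{eq: multi-node} is the same as setting the bracketed vector together with the input to zero. The first block-row is $\dot\gamma=\eta(v_{dc}-v_{dc}^*\mathds{1}_n)$, so $\dot\gamma^*=0$ forces $v_{dc}=v_{dc}^*\mathds{1}_n$, i.e.\ the relative DC voltage vanishes, $\tilde v_{dc}^*=0$. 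Substituting this into the remaining four block-rows and setting them to zero gives, for the steady-state AC quantities $(i^*,v^*,i_\ell^*)$ and the input $\mathbf{u}$,
\begin{align*}
\mathbf{u}&=\tfrac{1}{2}\mu\,\mathrm{Rot}(\gamma^*)^\top i^*, & Z_R\,i^*+v^*&=\tfrac{1}{2}\mu v_{dc}^*\,\mathrm{Rot}(\gamma^*)\mathds{1}_n,\\
i^*&=Z_C\,v^*+\mathbf{B}\,i_\ell^*, & Z_\ell\,i_\ell^*&=\mathbf{B}^\top v^*.
\end{align*}

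Next I would carry out the elimination. The last identity gives $i_\ell^*=Z_\ell^{-1}\mathbf{B}^\top v^*$, so the third becomes $i^*=(Z_C+\mathbf{B}Z_\ell^{-1}\mathbf{B}^\top)v^*=:W v^*$, hence $v^*=W^{-1}i^*$. Substituting this into the second identity yields $(Z_R+W^{-1})i^*=\tfrac{1}{2}\mu v_{dc}^*\,\mathrm{Rot}(\gamma^*)\mathds{1}_n$, so with $Y=(Z_R+W^{-1})^{-1}=\big(Z_R+(Z_C+\mathbf{B}Z_\ell^{-1}\mathbf{B}^\top)^{-1}\big)^{-1}$ one obtains $i^*=\tfrac{1}{2}\mu v_{dc}^*\,Y\,\mathrm{Rot}(\gamma^*)\mathds{1}_n$, and back-substitution also gives $v^*$ and $i_\ell^*$ explicitly (so that the steady-state angles determine all remaining states). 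Plugging $i^*$ into the first identity gives $\mathbf{u}=\tfrac{1}{4}\mu^2 v_{dc}^*\,\mathrm{Rot}(\gamma^*)^\top Y\,\mathrm{Rot}(\gamma^*)\mathds{1}_n=\xi\,\mathrm{Rot}(\gamma^*)^\top Y\,\mathrm{Rot}(\gamma^*)\mathds{1}_n$, which is \eqref{eq: angles}.

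I expect the main obstacle to be justifying the well-posedness of these manipulations, namely the invertibility of $Z_\ell$, of $W=Z_C+\mathbf{B}Z_\ell^{-1}\mathbf{B}^\top$, and of $Z_R+W^{-1}$. I would establish this via the complex embedding $\mathbb{R}^2\cong\mathbb{C}$ that maps $J_2$ to the imaginary unit $\mathrm{j}$: under it $Z_R,Z_C,Z_\ell$ become the scalar complex matrices $(R+\mathrm{j}L\omega^*)\mathbf{I}$, $(G+\mathrm{j}C\omega^*)\mathbf{I}$, $(R_\ell+\mathrm{j}L_\ell\omega^*)\mathbf{I}$, and $\mathbf{B}$ becomes $\mathcal{B}$, so $W$ becomes $(G+\mathrm{j}C\omega^*)\mathbf{I}+(R_\ell+\mathrm{j}L_\ell\omega^*)^{-1}\mathcal{B}\mathcal{B}^\top$. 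Since the graph Laplacian $\mathcal{B}\mathcal{B}^\top$ is symmetric positive semidefinite with eigenvalues $\lambda\ge 0$, the eigenvalues of $W$ are $(G+\mathrm{j}C\omega^*)+\lambda(R_\ell+\mathrm{j}L_\ell\omega^*)^{-1}$, whose real part $G+\lambda R_\ell/(R_\ell^2+(L_\ell\omega^*)^2)$ is strictly positive because $G,R_\ell>0$; hence $W$ — and likewise $Z_R,Z_C,Z_\ell$ — is invertible. Moreover $\mathrm{Re}(1/w)=\mathrm{Re}(w)/|w|^2>0$ whenever $\mathrm{Re}(w)>0$, so the eigenvalues of $Z_R+W^{-1}$ have real part at least $R>0$ and $Y$ is well defined. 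A coordinate-free variant of this argument works directly on the $2\times 2$ block $aI+bJ_2$ (whose determinant $a^2+b^2$ is positive) together with positive semidefiniteness of the symmetric part of $\mathbf{B}Z_\ell^{-1}\mathbf{B}^\top$. Once invertibility is secured, everything above reduces to routine linear algebra.
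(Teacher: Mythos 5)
Your proposal is correct and follows essentially the same route as the paper: impose $\dot z=0$, deduce $v_{dc}=v_{dc}^*\mathds{1}_n$ from the angle equation, and eliminate $i_\ell^*$, $v^*$, $i^*$ bottom-up to land on $\mathbf{u}=\xi\,\mathrm{Rot}(\gamma^*)^\top Y\,\mathrm{Rot}(\gamma^*)\mathds{1}_n$. The only difference is that the paper merely asserts non-singularity of $Z_C+\mathbf{B}Z_\ell^{-1}\mathbf{B}^\top$ and $Z_R+(Z_C+\mathbf{B}Z_\ell^{-1}\mathbf{B}^\top)^{-1}$ from $R>0$ and $G>0$, whereas you supply the justification explicitly via the complex embedding and the Laplacian spectrum — a welcome but not structurally different addition.
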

 
 \begin{proof}
 To begin with, we solve for the steady state $z^*$ by setting \eqref{eq: non-lin} to zero. Note that $Z_C+\mathbf{B}\, Z_\ell^{-1}\mathbf{B}^\top$ and $Z_R+(Z_C+\mathbf{B} \, Z_\ell^{-1}\, \mathbf{B}^\top)^{-1}$ are non-singular {\tb matrices} due to the presence of the resistance $R>0$ and the load conductance $G>0$, where $\mathbf{B}\, Z_\ell^{-1}\mathbf{B}^\top$ is a weighted Laplacian matrix.
 The steady state of the lines is described by $i^*_\ell=Z^{-1}_\ell\, \mathbf{B}^\top v^*$, from which follow{\tb s} that $v^*=(Z_C+\mathbf{B} Z_\ell^{-1}\,\mathbf{B}^\top)^{-1}i^*$ for the output capacitor voltage {\tb at steady state}. The steady state inductance current is given by $i^*= \frac{1}{2} \mu {Y}\; \text{Rot}(\gamma^*)\,v^*_{dc}\mathds{1}_n$ and finally from $\frac{1}{2}\mu \text{Rot}^\top(\gamma^*) i^*=\mathbf{u}$, we deduce~\eqref{eq: angles}.
 \end{proof}
 
Notice that the matrix ${Y}\in\real^{2n\times 2n}$ in \eqref{eq: angles} has an {\em admittance-like} structure which is customary in the analysis of power system models and encodes in particular the admittance of the transmission lines according to the network topology given by the weighted network Laplacian $\mathbf{B}\; Z^{-1}_\ell \; \mathbf{B}^\top$, as well as the converter output filter parameters given by the impedance matrices $Z_R$ and $Z_C$. Once we solve for the steady state angles $\gamma^*\in\mathbb{T}
^n$, we recover the full steady state vector $z^*\in\mathcal{M}$. It is noteworthy that, each angle vector $\gamma^*$ determines a unique steady state $z^*\in\mathcal{M}$, which induces a steady state manifold $\mathcal{S}(z^*)$ as described in \eqref{eq: rot-symm}.

{Equation \eqref{eq: angles} can be understood as a steady state map (in the sense of \cite{isidori1990output}),
$$\mathcal{P}:  \mathds{T}^n\to  \real^n, \, \gamma^* \mapsto  \xi\,\text{Rot}(\gamma^*)^\top \mathcal{Y}\; \text{Rot}(\gamma^*)\,\mathds{1}_n ,$$ taking as argument the desired steady state angle $\gamma^*$ and mapping into the feasible input $\mathbf{u}$ pertaining to the set $\mathcal{U}$.
The steady state angles in \eqref{eq: angles} are obtained from solving an AC optimal power flow problem. Equation \eqref{eq: angles} is a power balance equation between electrical power $P^*_{e}= v_{dc}^*\,\xi Rot^\top(\gamma^*)\, i^*= v_{dc}^*\xi \,\text{Rot}(\gamma^*)^\top \mathcal{Y}\; \text{Rot}(\gamma^*)\;\mathds{1}_n$ and DC power given by $P^*_{m}=v_{dc}^*\mathbf{u}$. 
}
{In the sequel, we denote by $\mathcal{U}$ the set of all feasible inputs $\mathbf{u}$ given by \eqref{eq: angles}.
}
 \subsection{Steady state set}
Take $\textbf{u}\in\mathcal{U}$ and let $\mathcal{M}\subset\mathcal{X}$ be a non-empty steady-state manifold resulting from setting \eqref{eq: non-lin} to zero and given by,  
\begin{align}
\mathcal{M}=\left.\{z^*\in \mathcal{X} \, \vert \: {f}(z^*,u)=0 \right.\}.
\label{eq:ss-manifold}
\end{align}
{We are particularly interested in a synchronous steady-state with the following properties: 
\begin{itemize}
	\item The frequencies are synchronized at the nominal value $\omega^*$ mapped into a nominal DC voltage ${v_{dc}^*}\tb \geq 1$.
	\begin{align*}
	[\omega]&=\{\omega\in\mathds{R}_{\geq 0}^n|\,\omega=\omega^*\mathds{1}_n\}\,,\\
	[v_{dc}]&=\{ v_{dc}\in\mathds{R}_{\geq 0}^n|\, v_{dc}=v^*_{dc}\mathds{1}_n\}\,.
	\end{align*}
	\item 
	The angles are stationary 
	\[
	[\gamma]=\{\gamma\in \mathds{T}^n|\, \dot \gamma^*=0 \}\,.
	\]

	\item The inductor currents, capacitor voltage and line current are constant at steady state in a rotating frame
	\begin{align*}
	[i]&=\left\{i\in \mathbb{R}^{2n}|\dot i^*=0\right\}\,, [v]=\left\{v \in \mathbb{R}^{2n}| \dot v ^*=0\right\}\,,\\
	[i_{ \ell}]&=\left\{i_{ \ell}\in \mathbb{R}^{2m}| \dot i_{ \ell}^*=0\right\}. 
	\end{align*}
\end{itemize}

}
\subsection{Symmetry of the vector field}
\label{sec: symm}
Consider the nonlinear power system model  in \eqref{eq: non-lin}. For all $\theta\in\mathbb{S}^1$, it holds that,
\begin{align}
 f(\theta \, s_0+S(\theta)\, z,u)=f({\tb\mathcal{S}(z)},u)= S(\theta)\, f(z,u)\,,
\label{eq:invariance}
\end{align}
where we define the translation vector $s_0=\begin{bmatrix}
\mathds{1}^\top_n & 0^\top& 0^\top 
\end{bmatrix}^\top$, the matrix $S(\theta)=\left[\begin{smallmatrix}
\mathbf{I} & 0 & 0 \\ 0 & \mathbf{I} & 0\\ 0 & 0 & \mathbf{R}(\theta)
\end{smallmatrix}\right],\;$ and the set
\begin{align}
\label{eq: eq-class}
{\tb\mathcal{S}(z)}=\left\{\begin{bmatrix}
(\gamma +\theta \mathds{1}_n)^\top &  \tilde v_{dc}^\top & (\mathbf{R}(\theta)\,x)^\top \end{bmatrix}^\top, \, \theta\in\mathbb{S}^1 \right\}.
\end{align} 
The symmetry \eqref{eq:invariance} follows from observing that the rotation matrix $\mathbf{R}(\theta)$, commutes with the impedance matrices $Z_R,\,Z_C,\,Z_{\ell}$, the skew-symmetric matrix $\mathbf{J}$ and the incidence matrix $\mathbf{B}$. Notice that for $\theta=0$, it holds that  $S(z)=\{z\}$ and hence $z\in S(z)$.
In fact, the symmetry \eqref{eq:invariance} arises from the fact that the nonlinear power system model \eqref{eq: non-lin} has no absolute angle: A shift in all angles $\gamma\in\mathds{T}^n$, corresponding to a translation by $s_0$, induces a rotation in the angles of AC signals by $\mathbf{R}(\theta)$. Up to re-defining the $dq$ transformation angle to $\theta'_{dq}(t)=\theta_{dq}(t)+\theta$, the vector field \eqref{eq: non-lin} remains invariant under the translation $s_0$ and rotation action $S(\theta)$ in \eqref{eq:invariance}.


Consider the steady state manifold $\mathcal{M}$ described by \eqref{eq:ss-manifold}. Observe that a steady state $z^*\in \mathcal{M}$ pertains to a continuum of equilibria, as a consequence of the rotational symmetry \eqref{eq:invariance} and given by, 
 \begin{align}
 {\tb \mathcal{S}(z^*)}=\left\{\begin{bmatrix}
	 (\gamma^*+\theta\mathds{1}_n)^\top & \! 0^\top \!&  (\mathbf{R}(\theta)\,x^*)^\top
	\end{bmatrix}^\top,\,\theta\in\mathbb{S}^1 \right\}, 
	\label{eq: rot-symm}
   \end{align}
that is, for all $z^* \in \mathcal{M}$, it holds that $\mathcal{S}(z^*) \subset \mathcal{M}$. 




\section{Local synchronization of multi-converter power system}
{\tb
	\label{sec: local-asymptotic stability}
	
	In this section, we study local asymptotic stability of the steady state set $\mathcal{S}(z^*)$ {\tb in \eqref{eq: rot-symm}}, as an application of the center manifold theory  \cite[p.195]{wiggins1990introduction, carr2012applications}.

	\subsection{Preliminaries}
	We provide some background theory on center manifold theory \cite{carr2012applications}  which is our main tool for proving asymptotic stability.
	
	We review some key concepts from the center manifold theory. For this, consider a dynamical system in normal form,
	\begin{subequations}
		\label{eq: nrml-form}
		\begin{align}
		\dot y&= A_y y + f_1(y, \rho),\\  
		\dot \rho&= B_\rho \rho + f_2(y, \rho),
		\end{align} 
	\end{subequations}
where $A_y\in\real^{c\times c}$ has eigenvalues with zero real part and $B_\rho\in\real^{(n-c)\times (n-c)}$ has eigenvalues with negative real parts (or Hurwitz), and $f_1$ and $f_2$ are nonlinear functions with the following properties,
	\begin{align}
	& f_1(0,0)=0, J_{f_1}(0,0)=0, \\
	& f_2(0,0)=0, J_{f_2}(0,0)=0. 
	\end{align}
	An invariant manifold $\mathcal{W}^c$ is a center manifold of \eqref{eq: nrml-form}, if it can be locally represented as, 
	\begin{align}
	\label{eq: center-man}
	\mathcal{W}^c=\{(y,\rho) \in\mathcal{O}\vert \,\rho=h(y) \},
	\end{align}
	where $\mathcal{O}$ is a sufficiently small neighbourhood of the
	origin, $h(0)=0$ and $$J_h(0)=\frac{\dd\, h}{\dd y}{\bigg\vert_{y=0}}=0.$$
	
	It has been shown in \cite[Thm. 8.1]{khalil2002nonlinear} that a center manifold always exists and
	the dynamics of \eqref{eq: nrml-form} restricted to the center manifold are described by, 
	\begin{align}
	\label{eq: red-dyn}
	\dot\xi=A_y \xi +f_1(\xi, h(\xi)),    
	\end{align}
	for a sufficiently small $\xi\in\real^c$. Note that $\xi$ is a parametric representation of the dynamics along points on the center manifold $\mathcal{W}^c$ in \eqref{eq: center-man}.
	
	The stability of the system dynamics \eqref{eq: nrml-form} is analyzed from the dynamics on the center manifold using the reduction principle described in the following theorem.

	\begin{theorem}[\cite{wiggins1990introduction}, p.195]
		\label{thm: wiggins}
		If the origin is stable under \eqref{eq: red-dyn}, then the origin of \eqref{eq: nrml-form} is also stable. Moreover there exists a neighborhood $\mathcal{O}$ of the origin, such that for every $(y(0),\rho(0))\in\mathcal{O}$, there exists a solution $\xi(t)$ of \eqref{eq: red-dyn} and constants $c_1, c_2>0$ and $\gamma_1,\gamma_2>0$ such that, 
		\begin{align*}
		y(t)&=\xi(t)+r_1(t),\\
		\rho(t)&=h(\xi(t))+r_2(t),
		\end{align*}
		where $\norm{r_i(t)}<c_i\, e^{-\gamma_i\, t},\, i=1,2$.
	\end{theorem}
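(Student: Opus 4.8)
\emph{Proof strategy.} The plan is to derive the claim from two ingredients: exponential attraction of the transverse variable because $B_\rho$ is Hurwitz, and an ``asymptotic phase'' comparison of the $y$-component with a genuine solution of the reduced dynamics \eqref{eq: red-dyn}. As a first step I would flatten the center manifold \eqref{eq: center-man} via the near-identity change of coordinates $w=\rho-h(y)$, so that $\mathcal{W}^c$ becomes $\{w=0\}$. Differentiating and using the invariance identity $J_h(y)\big(A_y y+f_1(y,h(y))\big)=B_\rho h(y)+f_2(y,h(y))$, one obtains, on a small ball $\mathcal{B}_\delta$ about the origin,
\begin{align*}
\dot y &= A_y y + f_1(y,h(y)) + F(y,w),\\
\dot w &= B_\rho w + N(y,w),
\end{align*}
where $F(y,w)=f_1(y,h(y)+w)-f_1(y,h(y))$ and $N(y,w)=f_2(y,h(y)+w)-f_2(y,h(y))-J_h(y)\big(f_1(y,h(y)+w)-f_1(y,h(y))\big)$. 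Since $J_{f_1}(0,0)=J_{f_2}(0,0)=0$ and $J_h(0)=0$, both $F$ and $N$ vanish at $w=0$ and obey Lipschitz bounds $\norm{F(y,w)},\norm{N(y,w)}\le\kappa(\delta)\norm{w}$ with $\kappa(\delta)\to0$ as $\delta\to0$; replacing $f_1,f_2$ by a smooth cut-off outside $\mathcal{B}_\delta$ makes these bounds global, which is harmless because the estimates below confine the trajectories to $\mathcal{B}_\delta$.

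\emph{Transverse contraction.} Next I would exploit that $B_\rho$ is Hurwitz: pick $P\succ0$ solving $PB_\rho+B_\rho^\top P=-I$ and take the Lyapunov function $w\mapsto w^\top P w$. Along solutions in $\mathcal{B}_\delta$ its derivative is at most $-\norm{w}^2+2\kappa(\delta)\norm{P}\norm{w}^2\le-\tfrac12\norm{w}^2$ once $\delta$ is small, hence $\norm{w(t)}\le c_w e^{-\lambda t}\norm{w(0)}$ for some $c_w,\lambda>0$. Substituting this into the $y$-equation, $y(\cdot)$ solves $\dot y=A_y y+f_1(y,h(y))+p(t)$ with an exponentially small forcing $\norm{p(t)}=\norm{F(y(t),w(t))}\le c_p e^{-\lambda t}$, $c_p=\kappa(\delta)\,c_w\norm{w(0)}$.

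\emph{Asymptotic phase.} Here is where the real work lies. Fixing $\beta\in(0,\lambda)$, I would look for the correction $\zeta:=y-\xi$ -- with $\xi$ to solve \eqref{eq: red-dyn} -- inside the weighted space $X_\beta=\{\zeta\in C([0,\infty);\real^c):\ \sup_{t\ge0}e^{\beta t}\norm{\zeta(t)}<\infty\}$. Subtracting \eqref{eq: red-dyn} from the perturbed $y$-equation gives $\dot\zeta=A_y\zeta+Q(y(t),\zeta)+p(t)$ with $Q(y,\zeta)=f_1(y,h(y))-f_1(y-\zeta,h(y-\zeta))$, so $\norm{Q(y,\zeta)}\le\kappa(\delta)\norm{\zeta}$; equivalently, integrating the variation-of-constants formula \emph{from $+\infty$},
\begin{align*}
\zeta(t)=-\int_t^\infty e^{A_y(t-s)}\big(Q(y(s),\zeta(s))+p(s)\big)\,\mathrm{d}s=:(\mathcal{T}\zeta)(t).
\end{align*}
Because the eigenvalues of $A_y$ lie on the imaginary axis, $\norm{e^{A_y\tau}}$ grows at most polynomially, so against the weight $e^{-\beta s}$ the integral converges, $\mathcal{T}$ maps $X_\beta$ into itself, and for $\delta$ small it is a contraction (Lipschitz constant $O(\kappa(\delta)/\beta)$). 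Its unique fixed point $\zeta\in X_\beta$ makes $\xi:=y-\zeta$ a solution of \eqref{eq: red-dyn}; setting $r_1:=\zeta$ and $r_2:=h(y)-h(\xi)+w$ yields $y=\xi+r_1$, $\rho=h(\xi)+r_2$ with $\norm{r_1(t)},\norm{r_2(t)}$ decaying exponentially, which is the ``moreover'' part. Plain stability then follows: for $(y(0),\rho(0))$ small, $\xi(0)=y(0)-r_1(0)$ is small, so $\xi(t)$ stays near the origin by the assumed stability of \eqref{eq: red-dyn}, whence $y(t)$ and $\rho(t)=h(\xi(t))+r_2(t)$ remain near the origin; a first-exit-time argument (or the global cut-off) confirms the trajectory never leaves $\mathcal{B}_\delta$, closing the argument.

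\emph{Main obstacle.} The delicate point is the asymptotic-phase step: the reduced flow is only neutrally stable, so there is no contraction along the center directions and one cannot simply integrate forward; the remedy is to work in the weighted space $X_\beta$ and integrate from $+\infty$, and then to interlock this estimate with the trapping region so that every Lipschitz bound invoked is actually valid along the trajectory. In the application of Section~\ref{sec: setup} the center block $A_y$ reduces to the scalar $0$ -- a single zero mode -- so $e^{A_y t}\equiv\mathrm{Id}$ and the polynomial-growth bookkeeping becomes vacuous.
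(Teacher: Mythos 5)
The paper does not prove this statement: it is imported verbatim from Wiggins (p.~195) and used as a black box in the proof of the local stability theorem, so there is no in-paper argument to compare against. Your proposal reproduces the standard proof of the reduction principle from the cited literature (Carr/Wiggins) --- flattening the center manifold via $w=\rho-h(y)$ using the invariance identity, exponential collapse of the transverse variable from the Hurwitz block $B_\rho$, and the asymptotic-phase correction obtained as a fixed point in the weighted space $X_\beta$ with the variation-of-constants integral taken from $+\infty$ --- and it is sound, including the cut-off/trapping bookkeeping needed to make the Lipschitz bounds valid along the trajectory.
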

	
	Next, we provide background on set stability in the following definition.
	
	\begin{definition}[Set stability \cite{angeli2004almost}]
			\label{def: set-stb}
		A set $\mathcal{K}$ is called stable with respect to the dynamical system \eqref{eq: non-lin}, if for all $\epsilon>0$, there exists $\delta>0$, so that,
		\begin{align}
		\label{eq: stable}
		d(z_0, \mathcal{K})\leq \delta \implies d(z(t,z_0), \mathcal{K})<\epsilon,\quad \forall t\geq 0
		\end{align}
	\end{definition}
	A set $\mathcal{K}$ as in Definition \ref{def: set-stb} is  called  {\em asymptotically stable} with respect to a dynamical system \eqref{eq: non-lin}, if \eqref{eq: stable} holds and $$\lim_{t\to\infty} d(z(t,z_0),\mathcal{K})=0.$$
	
	\subsection{Local asymptotic stability}
	Next, we present our main result on local asymptotic stability of the set $\mathcal{S}(z^*)$ with respect to the multi-converter dynamics \eqref{eq: non-lin}. 
	
	The following assumption on the eigenvalues of the Jacobian of the multi-converter system \eqref{eq: non-lin} is crucial to derive our main result.
	\begin{assumption}
		\label{ass:eig-cdt}
	 Consider the linearized system described by the following equations,
		\begin{align}
		\label{eq: lin-sysmain}
		\delta {\dot z}&= J_f(z^*) \delta z.
		\end{align}
	Assume that $J_f(z^*)=\frac{\dd {f}}{\dd z}\left.\vert\right._{z=z^*}$ in  \eqref{eq: lin-sysmain}  representing the Jacobian of multi-converter system \eqref{eq: non-lin} linearized at $z^*\in\mathcal{M}$, has only one eigenvalue at zero and the real-parts of all other eigenvalues are in the open-left half plane.
	\end{assumption}
	
	\begin{remark}
	In Section \ref{sec: sys-linea}, we provide an approach on how to satisfy the eigenvalue condition in Assumption \ref{ass:eig-cdt} for the multi-converter system in an explicit  way.
	\end{remark}
	
	We now present our main result in the following theorem.
	
	\begin{theorem}[Local asymptotic stability]
		\label{thm: las}
		Consider the power system  dynamics in \eqref{eq: non-lin} under Assumption \ref{ass:eig-cdt} with a feasible input $\mathbf{u}\in\mathcal{U}$. Then,{\tiny } $\mathcal{S}(z^*)$ is locally asymptotically stable. Moreover, there exists a neighborhood $\mathcal{D}$ of $\mathcal{S}(z^*)$ such that for every $z(0)\in\mathcal{D}$, there exists a point $s\in\mathcal{S}(z^*)$, where 
		$$\lim_{t\to\infty} z(t)=s.$$
	\end{theorem}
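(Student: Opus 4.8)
The proof should be a direct application of the center manifold machinery recalled in the Preliminaries, combined with the rotational symmetry \eqref{eq:invariance}–\eqref{eq: rot-symm}. Fix a point $z^*\in\mathcal{M}$ and work with the shifted coordinate $\delta z = z - z^*$, so that the nonlinear dynamics \eqref{eq: non-lin} become $\delta\dot z = J_f(z^*)\,\delta z + g(\delta z)$ with $g(0)=0$, $J_g(0)=0$ (Taylor remainder). Under Assumption \ref{ass:eig-cdt} the spectrum of $J_f(z^*)$ splits as $\{0\}\cup\sigma_s$ with $\operatorname{Re}\sigma_s<0$, so there is a modal (real) similarity transformation $T$ putting $J_f(z^*)$ into block-diagonal form $\operatorname{diag}(0,\,B_\rho)$ with $B_\rho$ Hurwitz; in the new coordinates $(y,\rho)\in\real\times\real^{N-1}$ the system is in the normal form \eqref{eq: nrml-form} with $A_y=0$ scalar. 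This is the step that needs a small remark: the one-dimensional center eigenspace at $z^*$ is spanned by the tangent vector to the symmetry orbit $\mathcal S(z^*)$ at $z^*$, namely $\partial_\theta\big|_{\theta=0}\big(\theta s_0 + S(\theta)z^*\big) = s_0 + \operatorname{col}(0,0,\mathbf J x^*)$; differentiating \eqref{eq:invariance} in $\theta$ at $\theta=0$ and then in $z$ at $z^*$ shows this vector lies in $\ker J_f(z^*)$, which confirms the zero mode is exactly the symmetry direction and is otherwise harmless.

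**Key steps, in order.** (i) Shift coordinates to $\delta z$ and perform the modal decomposition to reach \eqref{eq: nrml-form} with $A_y=0$, $B_\rho$ Hurwitz, and nonlinearities $f_1,f_2$ vanishing to second order at the origin. (ii) Invoke the existence of a center manifold $\mathcal W^c=\{\rho=h(y)\}$ with $h(0)=0$, $J_h(0)=0$, and write the reduced dynamics \eqref{eq: red-dyn}: $\dot\xi = f_1(\xi,h(\xi))$. (iii) Show the reduced dynamics are trivial: because the symmetry orbit $\mathcal S(z^*)$ is a curve of equilibria of \eqref{eq: non-lin} passing through $z^*$ and tangent to the center eigenspace at $z^*$, it must coincide locally with the (unique up to the usual non-uniqueness, but here forced) center manifold, and along it the vector field is identically zero; hence $f_1(\xi,h(\xi))\equiv 0$ for small $\xi$, so $\dot\xi=0$ and the origin is stable (indeed every point near $0$ on the $\xi$-axis is an equilibrium) under the reduced dynamics. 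Concretely: the curve $\theta\mapsto \theta s_0 + S(\theta)z^* - z^*$ expressed in $(y,\rho)$ coordinates is a set of equilibria, its projection to the $y$-axis is a local diffeomorphism near $\theta=0$, and it is contained in *a* center manifold; since the reduced vector field is the restriction of $f$ to that manifold, it vanishes. (iv) Apply Theorem \ref{thm: wiggins} (the reduction principle): stability of the origin of the reduced system gives stability of the origin of \eqref{eq: nrml-form}, i.e. stability of $z^*$; moreover for every initial condition in a neighborhood $\mathcal O$ there is $\xi(t)$ solving \eqref{eq: red-dyn} with $y(t)=\xi(t)+r_1(t)$, $\rho(t)=h(\xi(t))+r_2(t)$, $\|r_i(t)\|\le c_i e^{-\gamma_i t}$. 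Since $\dot\xi\equiv 0$, $\xi(t)\equiv\xi_\infty$ is constant, so $y(t)\to\xi_\infty$ and $\rho(t)\to h(\xi_\infty)$; transporting back through $T$ and the shift, $z(t)\to s$ where $s$ is the point of $\mathcal S(z^*)$ corresponding to parameter value $\xi_\infty$. This proves convergence to a point of $\mathcal S(z^*)$. (v) Upgrade from the single point $z^*$ to the whole set $\mathcal S(z^*)$: for each $s\in\mathcal S(z^*)$ the symmetry \eqref{eq:invariance} conjugates the dynamics near $s$ to the dynamics near $z^*$ (with $J_f(s)=S(\theta)J_f(z^*)S(\theta)^{-1}$, same spectrum, Assumption \ref{ass:eig-cdt} still holds), so there is a uniform-radius neighborhood; taking $\mathcal D=\bigcup_{s\in\mathcal S(z^*)}\mathcal O_s$ and using compactness of $\mathcal S(z^*)$ (it is the continuous image of $\mathbb S^1$) gives $\delta>0$ with $d(z_0,\mathcal S(z^*))\le\delta\Rightarrow$ convergence to some point of $\mathcal S(z^*)$, which is precisely local asymptotic stability of the set in the sense of Definition \ref{def: set-stb}.

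**Main obstacle.** The delicate point is step (iii): identifying the center manifold with the symmetry orbit and concluding the reduced dynamics vanish. Center manifolds are not unique, so one must argue carefully — the clean way is not to claim $\mathcal W^c=\mathcal S(z^*)$ but rather: $\mathcal S(z^*)$ is an invariant curve of equilibria tangent to the center subspace, every trajectory starting near it and confined to any center manifold satisfies the reduced ODE, and the reduced ODE has the $y$-axis image of $\mathcal S(z^*)$ as a curve of equilibria; since that image is an open neighborhood of $0$ in $\real^c=\real$, the one-dimensional reduced vector field is zero on an open set, hence identically zero near $0$. (In dimension $c=1$ this is immediate; it is exactly where $A_y$ being a scalar zero helps.) A secondary, more bookkeeping-type difficulty is the uniformity in step (v): one must check that the neighborhood size, the constants $c_i,\gamma_i$, and the center-manifold construction depend continuously on the base point $s\in\mathcal S(z^*)$ — which follows because all objects are obtained from $z^*$ by the smooth, isometric (in the $x$-block) action $S(\theta)$, so the estimates are in fact $\theta$-independent and compactness is only needed to patch the neighborhoods into a single $\mathcal D$.
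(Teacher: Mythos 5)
Your proposal is correct and follows essentially the same route as the paper: block-diagonalize $J_f(z^*)$ into a scalar zero mode plus a Hurwitz block, recognize the symmetry orbit $\mathcal{S}(z^*)$ as an invariant curve of equilibria tangent to the center subspace so that the reduced dynamics on the center manifold are identically zero, apply the reduction principle to get pointwise convergence, and patch neighborhoods over the compact orbit to obtain $\mathcal{D}$. Your extra care in step (iii) about non-uniqueness of center manifolds (arguing the one-dimensional reduced vector field vanishes on an open set of equilibria) is a slightly more scrupulous version of the paper's direct claim that the transformed orbit \emph{is} a center manifold, but the substance is the same.
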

	
	\begin{proof}
		To prove that $\mathcal{S}(z^*)$ is stable, we consider the system dynamics \eqref{eq: non-lin} under Assumption \ref{ass:eig-cdt}.
		
		Without loss of generality, assume $z^*=0$. From Assumption \ref{ass:eig-cdt}, we know there exists a transformation $T\in \real^{N\times N}$, such that $T \,J_f(0)\, T^{-1}$ is block diagonal, where $J_f(0)$ is given in \eqref{eq: lin-sys}, with a zero for the first component and a block $B$ that is Hurwitz. We rewrite the dynamics of \eqref{eq: non-lin} as
		\begin{align*}
		\dot z = J_f(0)\, z + (f(z,u)- J_f(0)\, z)
		\end{align*}
		where $z$ is near the origin $0\in\mathcal{S}(z^*)$. Next, by defining $(y,\rho)=T \,z$, we arrive at the following system in normal form
		\begin{subequations}
			\begin{align}
			\dot y&= f_1(y,\rho)\\
			\dot \rho&=B\, \rho+f_2(y,\rho),
			\end{align}
			\label{eq: sys-dyn}
		\end{subequations}
		where $f_1(0,0)=0, f_2(0,0)=0$ and $J_{f_1}(0,0)=J_{f_2}(0,0)=0$.
		
		Now, we show that, $$\mathcal{W}^c:=\{(y, \rho)\vert(\exists \, z\in\mathcal{S}(0)) \times (y, \rho)=T\, z\},$$
		is a center manifold for the system dynamics \eqref{eq: sys-dyn}. 
		
		First, $\mathcal{W}^c$ is invariant because it consists of
		equilibria of \eqref{eq: sys-dyn}. Second, $\mathcal{W}^c$ is tangent to the $y$-axis at $y=0$. To see this, define
		$$\tilde f(y,\rho):=f\left(T^{-1} \begin{bmatrix}
		y \\ \rho
		\end{bmatrix}\right)=f(z).$$
		Then $\mathcal{W}^c=\{(y,\rho)\vert \tilde f(y,\rho)=0 \}$. 
		The row vectors of the Jacobian  given by $$J_{\tilde f}(0,0)=\begin{bmatrix}
		\frac{\partial \tilde f_1(0,0)}{\partial y} &\frac{\partial \tilde f_1(0,0)}{\partial \rho}\\  \vdots & \vdots  \\ \frac{\partial\tilde f_N(0,0)}{\partial y}  & \frac{\partial \tilde f_N(0,0)}{\partial \rho}
		\end{bmatrix}=\frac{\dd f }{\dd z}\bigg\vert_{z=0}\, T^{-1}=J_f(0)\, T^{-1}$$ 
		span the normal space of $\mathcal{W}^c$ at $0$. Since the columns of $T^{-1}=(v(0),\dots )$ consist of the right eigenvectors of $J_f(0)$, 
		by means of $J_f(0)\,v(0)=0$,  $J_f(0)\, T^{-1}$ has a zero first column. This shows that $J_{\tilde f}(0,0)$ has its first entry (corresponding to $y-$component) equal to zero. 
		As a consequence, there exists a function $h(y)$ such that $h(0)=0$ and $\frac{\dd h}{\dd y}\vert_{y=0}=0$ in a neighborhood $\mathcal{W}_0$ of $0$, where $\mathcal{W}^c\cap \mathcal{W}_0 =\{(y,\rho)\vert \rho=h(y)\}$.
		
		It follows that the dynamics restricted to $\mathcal{W}_0$ are given by 
		$\dot\xi=0$ because $\mathcal{W}^c$ is an equilibrium manifold to \eqref{eq: sys-dyn} and thus $f_1(\xi,h(\xi))=0$. This shows that $\xi(t)=\xi(0)$.
		By applying Theorem \ref{thm: wiggins}, the solutions for $(y, \rho)$ starting in $\mathcal{W}_0$ are described by, 
		\begin{align*}
		y(t)&= \xi(t) + r_1(t),\\
		\rho(t)&= h(\xi(t)) + r_2(t),
		\end{align*}
		where $\norm{r_i(t)}<c_i e^{-\gamma_i t}, \, i=1,2$ for some constants $c_i,\gamma_i>0$. This implies that, 
		$$\lim_{t\to\infty} (y(t), \rho(t))=(\xi(0),h(\xi(0))), $$ and thus, $$\lim_{t\to\infty} z(t)= T^{-1} (\xi(0),h(\xi(0)))\in\mathcal{S}(0).$$
		
		This argument can be repeated for each point on $\mathcal{S}(0)$ to obtain a cover $\{\mathcal{W}_k\}$ of  $\mathcal{S}(0)$. Since $\mathcal{S}(0)$ is compact, we can construct a finite subcover to form a neighbourhood $\mathcal{D}=\bigcup_k \mathcal{W}_k$ of $\mathcal{S}(0)$. Local asymptotic stability of $\mathcal{S}(0)$ follows directly.
	\end{proof}
}
Note that our results conceptually apply to prove {\tb local asymptotic stability of a synchronous steady state set} with respect to trajectories of high-order dynamics of synchronous machines and find an estimate of their region of attraction, based on the structural similarities between synchronous machines and DC/AC converter in closed-loop with the matching control \cite{arghir2018grid}. Our {\tb local} analysis also paves the way for a global analysis of the stability of high-order multi-machine or multi-converter system with non-trivial conductance, which is an open problem in the power system community \cite{willems1976comments,chiang1989study}.

{ \section{\tb Sufficient conditions for the stability of the linearized  system}
 \label{sec: sys-linea}
{\tb This section derives sufficient conditions to satisfy the eigenvalue decomposition described in Assumption \ref{ass:eig-cdt} in an {\em explicit} way for a class of linear systems that applies to the stability of the linearized multi-converter system}. 
\subsection{Lyapunov stability of vector fields with symmetries}
{\tb In this section}, we develop a stability theory for a general class of linear systems enjoying some of the structural properties featured by the Jacobian matrix  in \eqref{eq: non-lin}. For this, we consider a class of partitioned linear systems  of the form
\begin{equation}
\label{eq:lin-sys-dq}
\dot x=\left[\begin{array}{c|c}
A_{11} & A_{12} \\ \hline A_{21} & A_{22}
\end{array}\right] x, 
\end{equation} 
where $x = [x_{1}^{\top}~x_{2}^{\top}]^{\top}$ denotes the partitioned state vector, and the block matrices $A_{11}, A_{12}, A_{21}, A_{22}$ are of appropriate dimensions.

In the following, we assume stability of the subsystem characterized by $A_{11}$ and the existence of a symmetry, i.e., an invariant zero eigenspace.
\begin{assumption}
In  \eqref{eq:lin-sys-dq}, the block {\tb diagonal} matrix $A_{11}$ is Hurwitz. 
\label{ass:a11-matrix}
\end{assumption}

\begin{assumption}\label{ass:v-vector}
	There exists a vector $p=[ p_1^{\top} ~ p_2^{\top}]^{\top}$, so that 
	\[
	A \cdot \text{span}\{p\}=0.
	\]
\vspace{-1.5em}
\end{assumption}

We are interested in asymptotic stability  of the subspace $\text{span}\{p\}$: all eigenvalues of $A$ have their real part in the open left-half plane except for one at zero, whose eigenspace is $\text{span\{p\}}$. Recall that the standard stability definitions and Lyapunov methods extend from stability of the origin to stability of closed and invariant sets when using the point-to-set-distance rather than merely the norm in the comparison functions; see e.g., \cite[Theorem 2.8]{lin1996smooth}. In our case, we seek a quadratic Lyapunov function that vanishes on $\text{span}\{p\}$, is positive elsewhere and whose derivative is decreasing everywhere outside $\text{span}\{v\}$.

We start by defining a Lyapunov function candidate,
\begin{align}
\label{eq:LF1}
V(x) = x^{\top} \,\left(P-\frac{P p p^{\top}P}{ p^{\top} P p}\right) x\,,
\end{align}
where $P$ is {\it a positive definite} matrix.
Our Lyapunov candidate construction is based on two {\it key} observations: 
\begin{itemize}
	\item First, the function ${V}(x)$ is defined via an oblique projection of the vector $x\in\real^n$ parallel to $\text{span}\{p\}$ onto $\{x \in\real^n \vert \,p^\top P\, x=0\}$. If $P=\mathbf{I}$, then $V$ is the {\em orthogonal projection onto $\mathrm{span}\{p\}^\perp$}. 
	Hence, $V(x)$ vanishes on $\text{span}\{p\}$ and is strictly positive definite elsewhere.
	\item Second, the positive definite matrix $P$ is {\it a degree of freedom} that can be specified later	to provide sufficient stability conditions. 

\end{itemize}

In standard Lyapunov analysis, one seeks a pair of matrices $(P,\mathcal{Q})$ with suitable positive (semi-) definiteness properties so that the  Lyapunov equation  $P\,A + A^{\top}P = -\mathcal{Q}$ is met. In the following, we apply a helpful twist and parameterize the $\mathcal{Q}$-matrix as a quadratic function $\mathcal Q(P)$ of $P$, which renders the  Lyapunov equation to an {$\mathcal{H}_\infty$} algebraic  Ricatti equation. We choose the following structure for the matrix $\mathcal{Q}(P)$,
\begin{align}
\label{eq:q-matrix}
\mathcal{Q}(P)=\begin{bmatrix}
\mathcal{Q}_1 & {H}^{\top}(P) \\ {H}(P) & {H}(P)\mathcal{Q}_1^{-1} {H}(P)^{\top}+\mathcal{Q}_2
\end{bmatrix},
\end{align}
where $\mathcal{Q}_1$ is a positive definite matrix, $\mathcal{Q}_2$ is a positive semi-definite matrix with respect to $\text{span}\{p_2\}$, $P$ is block-diagonal,
\begin{align}
\label{eq:matrix-P-}
P=\begin{bmatrix}
P_{1}  & 0 \\  0 &  P_{2}
\end{bmatrix}, 
\end{align}
with $P_{1}=P_{1}^{\top}>0$ and $P_{2}= P_{2}^{\top}>0$, i.e., the Lyapunov function is {\it separable}, and finally
${H}(P)= {A}^{\top}_{12}\, P_{1}+
P_{2}\, A_{21}$ is a shorthand. 

We need to introduce a third and final assumption.
\begin{assumption} 
\label{ass:hurwitz-cdt-}
Consider the matrix $
{F}=A_{22}+ A_{21}\mathcal{Q}_1^{-1} P_{1}A_{12}
$ and the transfer function,  $$\mathcal{G}=\mathcal{C}\, (s\,\mathbf{I}-F)^{-1}\, B,$$ with $ B=A_{21}\mathcal{Q}_1^{-1/2},\, \mathcal{C}=(A_{12}^\top P_1 \mathcal{Q}_1^{-1}P_1 A_{12}+\mathcal{Q}_2)^{1/2}$. Assume that ${F}$ is Hurwitz and that $\norm{\mathcal{G}}_\infty<1$.
\end{assumption}

Assumption~\ref{ass:hurwitz-cdt-} will guarantee suitable definiteness and decay properties of the Lyapunov function \eqref{eq:matrix-P-} under comparatively mild conditions discussed in Section~\ref{subsec: contextualizing}.

Assumptions \ref{ass:a11-matrix}, \ref{ass:v-vector}, and \ref{ass:hurwitz-cdt-} recover our requirement for positive definiteness of the matrix $P$  in \eqref{eq:matrix-P-} and semi-definitness (with respect to $\text{span}\{p\}$) of $\mathcal Q(P)$ in \eqref{eq:q-matrix} as shown in the following.
\begin{proposition} 
Under Assumptions \ref{ass:a11-matrix}, \ref{ass:v-vector} and \ref{ass:hurwitz-cdt-}, the matrix $P$ in \eqref{eq:matrix-P-} {exists}, is unique and positive definite. 
\label{prop:P-matrix-} 
\end{proposition}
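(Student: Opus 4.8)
The plan is to build the block-diagonal matrix $P$ in \eqref{eq:matrix-P-} by solving the two diagonal Lyapunov/Riccati equations separately and then verifying that the coupling prescribed by \eqref{eq:q-matrix} is consistent. First I would fix $P_1$: since $A_{11}$ is Hurwitz by Assumption~\ref{ass:a11-matrix} and $\mathcal{Q}_1$ is positive definite, the Lyapunov equation $P_1 A_{11} + A_{11}^\top P_1 = -\mathcal{Q}_1$ has a unique positive definite solution $P_1 = P_1^\top > 0$ by the standard Lyapunov theorem. This pins down $H(P) = A_{12}^\top P_1 + P_2 A_{21}$ as an affine function of the still-unknown $P_2$, and likewise pins down $F = A_{22} + A_{21}\mathcal{Q}_1^{-1} P_1 A_{12}$, $B = A_{21}\mathcal{Q}_1^{-1/2}$ and $\mathcal{C} = (A_{12}^\top P_1 \mathcal{Q}_1^{-1} P_1 A_{12} + \mathcal{Q}_2)^{1/2}$, all of which appear in Assumption~\ref{ass:hurwitz-cdt-}.

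Next I would obtain $P_2$ from the $(2,2)$ block of the parameterized Lyapunov equation $P A + A^\top P = -\mathcal{Q}(P)$. Writing out that block and substituting $H(P) = A_{12}^\top P_1 + P_2 A_{21}$ and the prescribed $(2,2)$ entry $H(P)\mathcal{Q}_1^{-1} H(P)^\top + \mathcal{Q}_2$ of \eqref{eq:q-matrix}, the cross terms $P_2 A_{21}\mathcal{Q}_1^{-1} A_{12}^\top P_1$ and their transpose reorganize the equation into the algebraic Riccati equation
\begin{align*}
P_2 F + F^\top P_2 + P_2 B B^\top P_2 + \mathcal{C}^\top \mathcal{C} = 0,
\end{align*}
which is exactly the $\mathcal{H}_\infty$-type Riccati equation whose data are $(F, B, \mathcal{C})$. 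Under Assumption~\ref{ass:hurwitz-cdt-} — $F$ Hurwitz and $\|\mathcal{G}\|_\infty < 1$ with $\mathcal{G} = \mathcal{C}(s\mathbf{I}-F)^{-1}B$ — the bounded-real lemma guarantees that this Riccati equation admits a unique stabilizing solution $P_2 = P_2^\top \geq 0$; moreover, because $F$ is already Hurwitz, the stabilizing solution is in fact positive \emph{definite} (it equals the controllability Gramian-like quantity $\int_0^\infty e^{F^\top t}(\mathcal{C}^\top\mathcal{C} + P_2 BB^\top P_2)e^{Ft}\,dt$, which is positive definite as soon as $\mathcal{C}^\top\mathcal{C} \succ 0$, and one checks $\mathcal{C}^\top\mathcal{C} = A_{12}^\top P_1 \mathcal{Q}_1^{-1}P_1 A_{12} + \mathcal{Q}_2 \succ 0$ on the relevant subspace). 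This yields existence and uniqueness of the block-diagonal $P = \mathrm{blkdiag}(P_1, P_2)$ with $P = P^\top > 0$, which is the claim.

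Finally I would close the loop by verifying \emph{a posteriori} that with $P_1, P_2$ so constructed, the off-diagonal blocks of $PA + A^\top P$ indeed equal $-H(P)$ as in \eqref{eq:q-matrix}; this is immediate since the $(1,2)$ block of $PA + A^\top P$ is precisely $P_1 A_{12} + A_{21}^\top P_2 = H(P)^\top$ by definition of $H$, so no extra condition is imposed. Thus all three defining blocks of $\mathcal{Q}(P)$ are matched and $P$ is well defined.

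The main obstacle I anticipate is the algebraic bookkeeping that recasts the $(2,2)$-block equation as the stated Riccati equation: one must carefully track how the term $P_2 A_{21}$ inside $H(P)$ combines with the $\mathcal{Q}_1^{-1}$-weighted products to produce the quadratic term $P_2 BB^\top P_2$ and how $A_{12}^\top P_1$ inside $H(P)$ contributes both to the drift $F$ (via the completion-of-squares shift $A_{22} \mapsto A_{22} + A_{21}\mathcal{Q}_1^{-1}P_1 A_{12}$) and to the constant term $\mathcal{C}^\top\mathcal{C}$. Getting the signs and the symmetrization right here is the delicate part; once the equation is in standard $\mathcal{H}_\infty$ form, invoking the bounded-real lemma under Assumption~\ref{ass:hurwitz-cdt-} is routine. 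A secondary point requiring a little care is arguing strict positive definiteness of $P_2$ rather than mere semi-definiteness, which is why the precise structure of $\mathcal{Q}_2$ (positive semi-definite, with the degeneracy aligned to $\mathrm{span}\{p_2\}$) and of $\mathcal{C}^\top\mathcal{C}$ must be used together with Assumption~\ref{ass:v-vector}.
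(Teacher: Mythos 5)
Your proposal is correct and follows essentially the same route as the paper: $P_1$ from the Lyapunov equation for the Hurwitz block $A_{11}$, the $(2,2)$ block recast as the $\mathcal{H}_\infty$ Riccati equation $P_2F+F^\top P_2+P_2BB^\top P_2+\mathcal{C}^\top\mathcal{C}=0$, existence and uniqueness of the stabilizing solution from $F$ Hurwitz and $\norm{\mathcal{G}}_\infty<1$, and strict definiteness of $P_2$ via the Lyapunov/Gramian argument with a positive definite constant term. The one step you leave implicit --- that $\mathcal{C}^\top\mathcal{C}=A_{12}^\top P_1\mathcal{Q}_1^{-1}P_1A_{12}+\mathcal{Q}_2\succ 0$ --- is exactly where the paper invokes Assumption \ref{ass:v-vector}: $Ap=0$ forces $A_{12}p_2=-A_{11}p_1\neq 0$, so $\ker\mathcal{Q}_2\cap\ker A_{12}=\{0\}$.
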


\begin{proof}
By calculating $P\,A+{A}^{\top} P = -\mathcal Q(P)$, where $A$ is as in \eqref{eq:lin-sys-dq}, $P$ is as in \eqref{eq:matrix-P-}, and $\mathcal Q(P)$ is as in \eqref{eq:q-matrix}, we obtain  
	\begin{align*}
	\left[\begin{smallmatrix}
	P_{1} \, A_{11}+  A_{11}^{\top} P_{1} & {H}(P)^{\top} \\ {H}(P) &  P_{2}\,{A}_{22}+ {A}_{22}^{\top} P_{2}
	\end{smallmatrix}\right]
	= -
	\left[\begin{smallmatrix}
	\mathcal{Q}_1 & {H}(P)^{\top} \\ {H}(P) &  {H}(P)\mathcal{Q}_1^{-1}{H}(P)^{\top}+ \mathcal{Q}_2
	\end{smallmatrix}\right]\,,
	\end{align*}
	the block-diagonal terms of which are
	 \begin{enumerate}
	 	\item[\cir{\small 1}] $P_{1}\, A_{11}+ {A}_{11}^{\top} P_{1}=-\mathcal{Q}_1$,
	 	\item[\cir{\small 2}] $ P_{2}\,  {A}_{22}+ {A}_{22}^{\top} P_{2}=-{H}(P)\mathcal{Q}_1^{-1}{H}(P)^{\top}-\mathcal{Q}_2$, 
	 \end{enumerate}
	 {\tb where ${H}(P)= {A}^{\top}_{12}\, P_{1}+P_{2}\, A_{21}$.}
	Since $A_{11}$ is Hurwitz, there is a unique and positive definite matrix $P_{1}$ solving $\cir{\small 1}$.
  Moreover, specification $\cir{\small 2}$ is equivalent to solving for $P_{2}$ in the following {$\mathcal{H}_\infty$} {algebraic} Riccati equation:
 \begin{align*}
\!\!\!P_{2}\, {A_{21}} \mathcal{Q}_1^{-1} A_{21}^{\top}P_{2}+P_{2} {F}+{F}^{\top} P_{2}+A_{12}^{\top} P_{1}\mathcal{Q}_1^{-1} P_1 A_{12}+\mathcal{Q}_2=0, 
\end{align*} 
 where ${F}=A_{22}+ A_{21}\mathcal{Q}_1^{-1} P_{1}A_{12}$.
{\tb Under Assumption \ref{ass:hurwitz-cdt-}, the pair $(F,B)$ is stabilizable with $B=A_{21}\mathcal{Q}_1^{-1/2}$ and for $\norm{\mathcal{G}}_\infty<1$, Theorem 7.4 in \cite{scherer2001theory} implies that no eigenvalues of the Hamiltonian matrix $\mathcal{H}=\left[\begin{smallmatrix}
F & B\, B^\top \\ -\mathcal{C}^{\tb\top}\,\mathcal{C} & -F^\top
\end{smallmatrix}\right]$ are on the imaginary axis with $\mathcal{C}=(A_{12}^\top P_1 \mathcal{Q}_1^{-1}P_1 A_{12}+\mathcal{Q}_2)^{1/2}$. By Theorem 7.2 in \cite{scherer2001theory}, there exists a unique stabilizing solution $P_{2}$ to $\cir{\small 2}$. Define $E=A_{12}^{\top} P_{1}\mathcal{Q}_1^{-1} P_1 A_{12}+\mathcal{Q}_2+P_{2}\, {A_{21}} \mathcal{Q}_{1}^{-1} A_{21}^{\top}P_{2}\geq 0$. From $Ap=0$ follows that $A_{12} p_2=-A_{11}p_1\neq 0$ and since $\mathcal{Q}_2 p_2=0$, $\ker{\mathcal{Q}_2}\cap\ker{A_{12}}=\{0\}$. This shows that $E$ is non-singular and thus $E$ is positive definite.
Since $F$ is Hurwitz, by standard Lyapunov theory \cite{khalil2002nonlinear}, the Lyapunov equation $P_2\,F+F^\top P_2+E=0$ admits a positive definite solution $P_2$.}
\end{proof}

\begin{lemma}
Under Assumptions \ref{ass:a11-matrix}, \ref{ass:v-vector} and \ref{ass:hurwitz-cdt-}, the matrix $\mathcal{Q}(P)$ in \eqref{eq:q-matrix} is positive semi-definite. Additionally, $\ker({A})=\ker(\mathcal{Q}(P))=\text{span}\{p\}$.
	\label{lem: q-prop}
\end{lemma}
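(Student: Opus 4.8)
The plan is to establish the two claims of Lemma~\ref{lem: q-prop} in sequence: first positive semi-definiteness of $\mathcal{Q}(P)$, then the kernel identity $\ker(A)=\ker(\mathcal{Q}(P))=\text{span}\{p\}$. For the semi-definiteness, I would exploit the block structure \eqref{eq:q-matrix} and recognize it as a Schur-complement form. Writing $\mathcal{Q}(P)=\left[\begin{smallmatrix}\mathcal{Q}_1 & H^\top \\ H & H\mathcal{Q}_1^{-1}H^\top+\mathcal{Q}_2\end{smallmatrix}\right]$, the bottom-right block minus the Schur complement of $\mathcal{Q}_1$ is exactly $\mathcal{Q}_2$. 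Since $\mathcal{Q}_1>0$ and $\mathcal{Q}_2\succeq 0$ (positive semi-definite with respect to $\text{span}\{p_2\}$ by hypothesis in \eqref{eq:q-matrix}), the standard Schur-complement criterion gives $\mathcal{Q}(P)\succeq 0$ directly. Concretely, for any $w=[w_1^\top~w_2^\top]^\top$ one has $w^\top \mathcal{Q}(P)\,w = (w_1+\mathcal{Q}_1^{-1}H^\top w_2)^\top \mathcal{Q}_1 (w_1+\mathcal{Q}_1^{-1}H^\top w_2) + w_2^\top \mathcal{Q}_2 w_2 \ge 0$, which also makes the kernel analysis transparent: equality forces $w_1 = -\mathcal{Q}_1^{-1}H^\top w_2$ and $\mathcal{Q}_2 w_2 = 0$.

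For the kernel identity, I would proceed by double inclusion. The inclusion $\text{span}\{p\}\subseteq\ker(A)$ is Assumption~\ref{ass:v-vector}. To see $\text{span}\{p\}\subseteq\ker(\mathcal{Q}(P))$: from $Ap=0$ we get $A_{11}p_1+A_{12}p_2=0$ and $A_{21}p_1+A_{22}p_2=0$; combining with the Lyapunov relations $\cir{\small 1}$ and $\cir{\small 2}$ (i.e. $PA+A^\top P=-\mathcal{Q}(P)$) we have $\mathcal{Q}(P)\,p = -(PA+A^\top P)p = -A^\top P p$, so it remains to show $A^\top P p = 0$, equivalently $p^\top P A = 0$; but $p^\top P A p' $ vanishes in the right directions because $Ap=0$ gives $p^\top A^\top = 0$ and $P$ is symmetric — more carefully, I would check $A^\top Pp=0$ by a direct block computation using $\mathcal{Q}_2 p_2 = 0$, $A_{12}p_2 = -A_{11}p_1$, and $H(P)p_2 = A_{12}^\top P_1 p_2 + P_2 A_{21}p_2$ together with the already-derived relation (from the proof of Proposition~\ref{prop:P-matrix-}) that $E$ is nonsingular. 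Alternatively, and more cleanly, use the quadratic form: $p^\top \mathcal{Q}(P)\,p = -p^\top(PA+A^\top P)p = -2\,p^\top P (Ap) = 0$, and since $\mathcal{Q}(P)\succeq 0$ this forces $\mathcal{Q}(P)p=0$. That is the slick route and I would use it.

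For the reverse inclusions, $\ker(\mathcal{Q}(P))\subseteq\text{span}\{p\}$: suppose $\mathcal{Q}(P)w=0$. By the sum-of-squares decomposition above, $w_1=-\mathcal{Q}_1^{-1}H^\top w_2$ and $\mathcal{Q}_2 w_2=0$. I then need to pin down $w_2$. Here I would invoke the nonsingularity of $E=A_{12}^\top P_1\mathcal{Q}_1^{-1}P_1 A_{12}+\mathcal{Q}_2+P_2 A_{21}\mathcal{Q}_1^{-1}A_{21}^\top P_2$ established in the proof of Proposition~\ref{prop:P-matrix-}, together with the Lyapunov identity $P_2 F + F^\top P_2 + E = 0$ and $F$ Hurwitz. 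The idea is that $\mathcal{Q}(P)w=0$ plus $A$'s block structure should force $w_2 \in \text{span}\{p_2\}$ and then $w_1$ is determined to lie in $\text{span}\{p_1\}$, giving $w\in\text{span}\{p\}$; and $\ker(A)\subseteq\text{span}\{p\}$ follows because the zero eigenvalue of $A$ is simple (the standing hypothesis that all other eigenvalues have negative real part, so $\dim\ker A = 1$). The main obstacle I anticipate is precisely this step — showing $\ker(\mathcal{Q}(P))$ is no larger than one-dimensional. Semi-definiteness and the forward kernel inclusions are essentially bookkeeping, but ruling out extra kernel directions of $\mathcal{Q}(P)$ requires genuinely using that $A_{12}p_2 = -A_{11}p_1 \neq 0$ (so $p_2\notin\ker A_{12}$) and $\ker\mathcal{Q}_2\cap\ker A_{12}=\{0\}$, exactly the facts extracted in the Proposition's proof, to conclude that the only way to have both $\mathcal{Q}_2 w_2=0$ and the cross-terms vanish is $w_2\parallel p_2$. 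I would structure the argument so that this reduction is isolated as the one nontrivial computation.
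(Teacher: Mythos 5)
Your overall route coincides with the paper's: Schur complement of $\mathcal{Q}_1$ for positive semi-definiteness, the quadratic-form computation $p^\top\mathcal{Q}(P)p=-p^\top(PA+A^\top P)p=0$ together with $\mathcal{Q}(P)\succeq 0$ for the inclusion $\text{span}\{p\}\subseteq\ker(\mathcal{Q}(P))$, and the two block equations for the reverse inclusion. However, your treatment of $\ker(A)\subseteq\text{span}\{p\}$ is circular: you justify it by "the standing hypothesis that all other eigenvalues have negative real part, so $\dim\ker A=1$." That is not among Assumptions \ref{ass:a11-matrix}, \ref{ass:v-vector}, \ref{ass:hurwitz-cdt-} — simplicity of the zero eigenvalue of $A$ is precisely what this lemma and Lemma \ref{lem: las} are establishing, so it cannot be assumed here. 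The correct (and short) argument is by contradiction: if $A\tilde v=0$ for some $\tilde v\notin\text{span}\{p\}$, then $\tilde v^\top(PA+A^\top P)\tilde v=0$, i.e.\ $\tilde v^\top\mathcal{Q}(P)\tilde v=0$, and positive semi-definiteness forces $\tilde v\in\ker(\mathcal{Q}(P))=\text{span}\{p\}$, a contradiction. In other words, $\ker(A)\subseteq\text{span}\{p\}$ is a \emph{consequence} of the kernel identity for $\mathcal{Q}(P)$, not an input to it.

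Separately, you flag the step "pin down $w_2$" in the reverse inclusion as the one nontrivial computation and propose to route it through the nonsingularity of $E$ and the condition $\ker\mathcal{Q}_2\cap\ker A_{12}=\{0\}$. This detour is unnecessary: the standing requirement on $\mathcal{Q}_2$ in \eqref{eq:q-matrix} ("positive semi-definite with respect to $\text{span}\{p_2\}$") is to be read as $\ker\mathcal{Q}_2=\text{span}\{p_2\}$, which is exactly what holds for the concrete choice $\mathcal{Q}_2=\mathbf{I}-v_2^*v_2^{*\top}/(v_2^{*\top}v_2^*)$ used later. Hence $\mathcal{Q}_2w_2=0$ gives $w_2=\lambda p_2$ directly, and then $w_1=-\mathcal{Q}_1^{-1}H(P)^\top w_2=\lambda p_1$ because $p$ itself satisfies the first block equation $\mathcal{Q}_1p_1+H(P)^\top p_2=0$. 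With the circularity in the $\ker(A)$ step repaired as above, the rest of your argument goes through and matches the paper's proof.
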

\begin{proof}
First, {\tb note that by Proposition \ref{prop:P-matrix-}, the matrix $P=P^\top>0$ and} observe that the matrix $\mathcal{Q}(P)$ in \eqref{eq:q-matrix} is symmetric and the upper left block $\mathcal{Q}_1>0$ is positive definite. By using the Schur complement and positive semi-definiteness of $\mathcal{Q}_2$, we obtain that $\mathcal{Q}(P)$ is positive semi-definite. 
	Second, by virtue of $p^{\top}\mathcal{Q}(P) p= p^{\top}(P\,A +A^{\top}P) p = 0$ due to Assumption~\ref{ass:v-vector}, it follows that $\text{span}\{p\}{\subseteq}\ker(\mathcal{Q}(P))$. 
	Third, consider a general vector $s=\left[\begin{smallmatrix}
	s_1^{\top}  s_2^{\top} 
	\end{smallmatrix}\right]^{\top}$, so that $\mathcal{Q}(P)s=0$. {\tb Given ${H}(P)= {A}^{\top}_{12}\, P_{1}+ P_{2}\, A_{21}$}, we obtain the algebraic equations $\mathcal{Q}_1s_1+{H}(P)^{\top} s_2=0,\, {H}(P) s_1+\left({H}(P){\mathcal{Q}_1^{-1}}{H}(P)^{\top} +\mathcal{Q}_2\right)s_2=0$.
	One deduces that $\mathcal{Q}_2 s_2=0$ and thus $s_2\in \text{span}\{p_2\}$. The latter implies $s_1\in-\mathcal{Q}_1^{-1}{H}(P)^{\top} \text{span}\{p_2\} \in\text{span}\{
	p_1\}$ because $\mathcal{Q}(P)\text{span}\{p\}=0$. 
	Thus, it follows that $s\in\text{span}\{ \left[\begin{smallmatrix}
	s_1^\top & s_2^\top
	\end{smallmatrix}\right]^\top \} =\text{span}\{p\}$ and we deduce that   $\ker(\mathcal{Q}(P)) = \text{span}\{p\}$. 
	Fourth and finally, for the sake of contradiction, take a vector $\tilde v \notin \text{span}\{p\}\,,$ so that $\tilde v\in \ker ({A}) \Rightarrow \tilde v^{\top}\left({A}^{\top} P+P\,{A}\right)\tilde v=0 \Rightarrow \tilde v^{\top}\mathcal{Q}(P)\tilde v=0 \Rightarrow \tilde v \in\ker (\mathcal{Q}(P))$.
	This is a contradiction to $\ker (\mathcal{Q}(P))=\text{span}\{p\}$. Hence, we conclude that $\ker({A})=\ker (\mathcal{Q}(P))=\text{span}\{p\}$.
\end{proof}
Next, we provide the main result of this section.
\begin{lemma}
	Consider the linear system \eqref{eq:lin-sys-dq}. Under Assumptions \ref{ass:a11-matrix}, \ref{ass:v-vector} and \ref{ass:hurwitz-cdt-}, $\text{span}\{p\}$ is an {asymptotically} stable subspace of $A$.
	\label{lem: las}
\end{lemma}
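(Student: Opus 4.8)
The plan is to exhibit $V$ from \eqref{eq:LF1} as a Lyapunov function for the \emph{set} $\text{span}\{p\}$ rather than for an isolated point: I will show it is positive definite and proper with respect to the point-to-set distance $d(\cdot,\text{span}\{p\})$, and that its derivative along \eqref{eq:lin-sys-dq} is negative definite in the same sense, after which asymptotic stability of $\text{span}\{p\}$ follows from the set version of Lyapunov's theorem \cite[Thm.~2.8]{lin1996smooth}. First I would use $P=P^\top>0$ (Proposition~\ref{prop:P-matrix-}) to factor $M:=P-\frac{Ppp^\top P}{p^\top Pp}=P^{1/2}(\mathbf{I}-ww^\top)P^{1/2}$ with the unit vector $w:=P^{1/2}p/\norm{P^{1/2}p}$; since $\mathbf{I}-ww^\top$ is the orthogonal projector onto $w^\perp$, the matrix $M$ is symmetric, positive semidefinite, and satisfies $\ker M=\text{span}\{p\}$. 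Hence $V(x)=\norm{M^{1/2}x}^2$ vanishes exactly on $\text{span}\{p\}$, and splitting $x$ into its orthogonal projection onto $\text{span}\{p\}$ plus the complementary part gives $c_1\,d(x,\text{span}\{p\})^2\le V(x)\le c_2\,d(x,\text{span}\{p\})^2$ with $c_2\ge c_1>0$ the largest and smallest nonzero eigenvalues of $M$.

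The key step is the identity $A^\top Pp=0$: combining the Lyapunov equation $PA+A^\top P=-\mathcal{Q}(P)$ from the proof of Proposition~\ref{prop:P-matrix-} with $p\in\ker\mathcal{Q}(P)$ (Lemma~\ref{lem: q-prop}) yields $(PA+A^\top P)p=0$, and then $Ap=0$ from Assumption~\ref{ass:v-vector} leaves $A^\top Pp=0$, equivalently $p^\top PA=0$. With this, the rank-one correction in $M$ is annihilated on both sides, so $MA=PA$ and $A^\top M=A^\top P$, and therefore
\[
MA+A^\top M=PA+A^\top P=-\mathcal{Q}(P).
\]
Consequently $\dot V(x)=x^\top(MA+A^\top M)x=-x^\top\mathcal{Q}(P)x$ along \eqref{eq:lin-sys-dq}, and since $\mathcal{Q}(P)\succeq0$ with $\ker\mathcal{Q}(P)=\text{span}\{p\}$ (Lemma~\ref{lem: q-prop}), the same orthogonal-decomposition estimate yields $\dot V(x)\le -c_3\,d(x,\text{span}\{p\})^2$ for some $c_3>0$.

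With $V$ positive definite and proper and $\dot V$ negative definite, both relative to $d(\cdot,\text{span}\{p\})$, I conclude that $\text{span}\{p\}$ is globally asymptotically — indeed exponentially — stable for \eqref{eq:lin-sys-dq}; equivalently, via LaSalle, every trajectory stays near and converges to the invariant set $\text{span}\{p\}=\{x:\dot V(x)=0\}$. Recalling $\ker A=\text{span}\{p\}$ from Lemma~\ref{lem: q-prop}, the zero eigenvalue has geometric multiplicity one, and the strict decrease of $V$ also rules out a nontrivial Jordan block at $0$ (such a block would keep $d(x(t),\text{span}\{p\})$ bounded away from $0$ while $V$ strictly decreased, contradicting $V\ge0$), so $\text{span}\{p\}$ is precisely the semisimple zero eigenspace while every other eigenvalue of $A$ lies in the open left-half plane; that is, $\text{span}\{p\}$ is an asymptotically stable subspace of $A$.

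The step I expect to be most delicate is the first one — arguing that a merely positive \emph{semi}definite quadratic form still qualifies as a set-Lyapunov function — which is exactly what the kernel identity $\ker M=\text{span}\{p\}$ together with the point-to-set reformulation of the comparison functions resolves. Once that is in place, the cross-term cancellation driven by $A^\top Pp=0$ lets $\dot V$ inherit the clean sign structure of $-\mathcal{Q}(P)$ established in Lemma~\ref{lem: q-prop} essentially for free.
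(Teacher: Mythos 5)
Your proposal is correct and follows essentially the same route as the paper: both use the factorization $P-\frac{Ppp^{\top}P}{p^{\top}Pp}=P^{1/2}\Pi_{w}P^{1/2}$ with $w=P^{1/2}p$, the identity $p^{\top}PA=0$ (obtained from $Ap=0$ and $\mathcal{Q}(P)p=0$) to get $\dot V=-x^{\top}\mathcal{Q}(P)x$, and the set-Lyapunov theorem \cite[Thm.~2.8]{lin1996smooth} together with Lemma~\ref{lem: q-prop}. Your extra details — the explicit comparison bounds in terms of $d(\cdot,\text{span}\{p\})$ and the argument excluding a nontrivial Jordan block at zero — are correct refinements the paper leaves implicit.
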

\begin{proof}
Consider the function $V(x)$ in \eqref{eq:LF1}. The matrix $P$ in \eqref{eq:matrix-P-} is positive definite by Proposition \ref{prop:P-matrix-}.
By taking $y= P^{1/2} x$ and $w=P^{1/2} p$, the function ${V}(x)$ can be rewritten as 
${V}(y)
= y^{\top}\left(\textbf{I}-\frac{w w^{\top}}{w^{\top} w}\right)y=y^{\top} \Pi_{w} y$.
The matrix $\Pi_{w}=\textbf{I}-\frac{w w^{\top}}{w^{\top} w}$ is a projection matrix into the orthogonal complement of $\text{span}(w)$, and is hence positive semi-definite with one-dimensional nullspace corresponding to $P^{1/2}\text{span}\{p\}$.  It follows that the function ${V}(x)$ is positive definite {\tb for all $x\in\text{span}\{p\}^{\perp}$}. 
By means of ${A} p=\mathcal{Q}(P)p=0$ in $p^{\top}P\,{A}=p^{\top}(\mathcal{Q}{\tb(P)}-{A}^{\top}P)=0$, we obtain $ \dot V(x) = -x^{\top}\, \mathcal{Q}(P) \, x.$
By Lemma \ref{lem: q-prop}, it holds that $\dot V(x)$ is negative definite {\tb for all $x\in\text{span}\{p\}^{\perp}$}. We apply Lyapunov's method  and Theorem 2.8 in \cite{lin1996smooth} to conclude that $\text{span}\{p\}$ is asymptotically stable. 
\end{proof}

\subsection{Stability of the linearized multi-DC/AC converter}
\label{sec:localstabilitydcac}
 Our next analysis takes under the loop the behavior of linearized trajectories described by the Jacobian of \eqref{eq: non-lin} at $z=z^*$. For this, we consider the linearized system described by the following equations,
\begin{align}
\label{eq: lin-sys}
\delta {\dot z}&=K^{-1} \left[\arraycolsep=0.8pt\def\arraystretch{1}
\scalemath{0.7}{ 
\begin{array}{cc|ccc}
0 & \eta \textbf{I}   & 0 & 0 & 0 \\   -\nabla^2 U(\gamma^*) & -  K_p \textbf{I} & -  \Lambda(\gamma^*)^{\top} & 0 & 0 \\ \hline   \Xi(\gamma^*) &   \Lambda(\gamma^*) & - {Z}_R & -  \textbf{I} & 0 \\ 0 & 0 &   \textbf{I} & - {Z}_C & - \mathbf{B} \\ 0 & 0 & 0&  \mathbf{B}^{\top} &  - {Z}_{\ell}
\end{array}}\right] \delta z=\left[\begin{array}{c|c}
A_{11} & A_{12} \\
\hline
A_{21} & A_{22}
\end{array} 
\right] \delta z.
\end{align}
The system matrix is the Jacobian $J_f(z^*)=\frac{\dd {f}}{\dd z}\left.\vert\right._{z=z^*}$,  $\delta z=\begin{bmatrix}\delta z^\top_1 & \delta z^\top_2\end{bmatrix}^\top \in T_{z^*}\mathcal{M}$, corresponding to the partition $\delta z_1=\begin{bmatrix} \delta \gamma^\top & \delta v_{dc}^\top \end{bmatrix}^\top \in \real^{2n},\, \delta z_2\in \real^{6n}$.
The matrices in \eqref{eq: lin-sys} are given by, 
\begin{align*}
   \nabla^2 U(\gamma^*)&= \frac{ v^*_{dc}}{4} \mu^2\text{diag}(\text{Rot}^\top(\gamma^*)\; \textbf{J}^\top  \mathcal{Y}\; \text{Rot}(\gamma^*)\;\mathds{1}_n ),\\
   &=\frac{1}{2}\mu\text{diag}((\mathbf{J} \text{Rot}(\gamma^*))^\top i^*),
   \\
   \Xi(\gamma^*)&=\frac{1}{2}\mu \textbf{J}\,\text{Rot}(\gamma^*), \\ 
   \Lambda(\gamma^*)&=\frac{1}{2}\mu v_{dc}^*\, \text{Rot}(\gamma^*),  
  \end{align*}
  where we consider the smooth potential function, $$U: \mathbb{T}^n \to \mathbb{R}, \, \gamma\mapsto -\xi\;\mathds{1}^\top_n\; \text{Rot}^\top(\gamma) \,\mathbf{J}^\top \mathcal{Y}\; \text{Rot}(\gamma^*)\;\mathds{1}_n.$$
  
Note that the Jacobian $J_f(z^*)$ has one-dimensional zero eigenspace denoted by,  
 $$\mathrm{span}\{v(z^*)\}=\mathrm{span}\{\left[ \begin{array}{c c c} 
\mathds{1}_n {^\top} & 0^\top & (\mathbf{J} \, x^*)^\top \end{array}\right]^\top\} \subset T_{z^*}\mathcal{M},$$ with $\mathbf{J} \, x^*=\begin{bmatrix}(\mathbf{J}\,i^*){^\top} & (\mathbf{J}v_{}^*){^\top} & (\mathbf{J}\, i_{\ell}^*){^\top}\end{bmatrix}^\top$. In particular, we can establish a formal link between the linear subspace $\mathrm{span}\{v(z^*)\}$ and the steady state set $\mathcal{S}(z^*)$ in \eqref{eq: rot-symm} as follows.  For all $\theta\in\mathbb{S}^1$, 
\begin{align*}
\mathcal{S}(z^*)&=z^*+\int_{0}^\theta \;v(z^*) \; \mathrm{d}s,  =z^*+{\mathlarger{\mathlarger{\int}}_{0}^\theta \begin{bmatrix}
\mathds{1}_n \\0 \\ \mathbf{J}\, \mathbf{R}(s)\, x^* 
\end{bmatrix} \; \mathrm{d}s}, \,
\end{align*}
which follows from \eqref{eq: rot-symm}. In fact, $v(z^*)$ is the tangent vector of $\mathcal{S}(z^*)$ in the $\theta$- direction and lies on the tangent space $T_{z^{*}}\mathcal{M}$. Hence, $\mathcal{S}(z^*)$ is the angle integral curve of $\text{span}\{v(z^*)\}$. 

It can be deduced from \eqref{eq:invariance}, that by expanding Taylor series around $(\theta',z^*),\, \theta'\in\mathbb{S}^1,\ z^*\in\mathcal{M}$ of left and right terms in \eqref{eq:invariance} and comparing the terms of their first derivatives with respect to $\theta$, we recover $J_f(z^*)\, v(z^*)=0$, as follows, 
\begin{align*}
\left.\frac{\dd f(z)}{\dd z}\right \vert_{z=z^*} \left(\left.\frac{\dd S(\theta)}{\dd \theta}\right \vert_{\theta=\theta'} \; z^*+s_0\right) \; (\theta-\theta') = 0,
\end{align*}
where $\frac{\dd f(z)}{\dd z}\ \vert_{z=z^*}=J_f(z^*)$, $\frac{\dd S}{\dd \theta}\vert_{\theta=\theta^{' }} z^*+s_0=v(z^*)$ (by the definition of the set \eqref{eq: rot-symm}), {\tb $S(\theta)=\left[\begin{smallmatrix}
\mathbf{I} & 0 & 0 \\ 0 & \mathbf{I} & 0\\ 0 & 0 & \mathbf{R}(\theta)
\end{smallmatrix}\right],\;$ and $s_0=\begin{bmatrix}
\mathds{1}^\top_n & 0^\top& 0^\top 
\end{bmatrix}^\top$.} 

Next, we consider the linearized model \eqref{eq: lin-sys} and identify the matrices
\begin{align*}
A_{11}&=\left[\begin{smallmatrix}
0 & \eta \textbf{I} \\   -C_{dc}^{-1} \nabla^2 U(\gamma^*) & -C_{dc}^{-1}  K_p \textbf{I}
\end{smallmatrix}\right],\, A_{12}=\left[\begin{smallmatrix}
0 & 0 & 0 \\   -C_{dc}^{-1}  \Lambda(\gamma^*)^{\top}  & 0 & 0
\end{smallmatrix}\right],\\
A_{21} &=\left[\begin{smallmatrix}
 L^{-1} \Xi(\gamma^*) & L^{-1}  \Lambda(\gamma^*)  \\   0 & 0 \\ 0 & 0
\end{smallmatrix}\right],\, A_{22}=\left[\begin{smallmatrix}
-L^{-1} \mathbf{Z}_R & -L^{-1}  \textbf{I} & 0 \\   C^{-1}  \textbf{I} & -C^{-1} \mathbf{Z}_V & -C^{-1} \mathbf{B} \\  0& L_{ \ell}^{-1} \mathbf{B}^{\top} &  -L_{ \ell}^{-1} \mathbf{Z}_{ \ell}
\end{smallmatrix}\right].
\end{align*}

Define the Lyapunov function $V(z)$ as in \eqref{eq:LF1} with $p = v(z^{*})=[v^{*\top}_1, v^{*\top}_2]^\top$.
Hence, $V(z)$ is positive semi-definite with respect to $\text{span}\{v(z^*)\}$. 
Next, we fix the matrix $\mathcal{Q}(P)$ given by \eqref{eq:q-matrix}, where we set $\mathcal{Q}_1=\textbf{I}$,  $\mathcal{Q}_2=\textbf{I}-{v_2^*v_2^{*\top}/{v_2^{*\top}v_2^*}}$ 
and search for the corresponding matrix $P$ so that, $$P\,J_f(z^*)+J_f(z^*)^{\top} P=-\mathcal{Q}(P).$$

Analogous to \eqref{eq:matrix-P-}, we choose the block diagonal matrix 
\begin{align}
\label{eq:matrix-P}
P=\left[\begin{array}{cc|c}
P_{11} & P_{12} & 0 \\ P_{12} & P_{22} & 0 \\ \hline 0 & 0 &  P_{33}
\end{array}\right]= \left[\begin{array}{c|c}
P_1 & 0 \\ \hline 0  &  P_2
\end{array}\right],
\end{align}
where $P_{11}, P_{12}$ and $P_{22}$ are matrices of appropriate dimensions.
Notice that the chosen structure of $P_1$ and the zeros in the off-diagonals in $P$ originate from the physical intuition of the tight coupling between the angle of the converter and its corresponding DC voltage (proportional to the AC frequency), as enabled by the matching control \eqref{eq:matching-ctrl}. The same type of coupling comes into play in synchronous machines between the rotor angle and its frequency, due to the presence of the electrical power in the swing equation \cite{kundur1994power}.
The $4\times 4$ matrix $P_2$ is dense with off-diagonals coupling at each phase, the inductance current of one converter with the others.

In the sequel, we show that this structure allows for sufficient stability conditions.
 \begin{assumption}[Parametric synchronization conditions]
 \label{ass:linear-stab}
{\tb Consider $P_{x,k}=\frac{1}{2}v_{dc}^* \mu r^\top(\gamma^*_k) i_k^*>0$, $Q_{x,k}=\frac{1}{2}v_{dc}^* \mu r^\top(\gamma^*_k) {J}^\top i_k^*>~0$} and the matrix $F$ as given in Assumption \ref{ass:hurwitz-cdt-}. Assume the following condition is satisfied,
 \begin{align}
\label{eq: ac-side}
\tb\cos(\phi_k)<\sqrt{1-\frac{\alpha^2}{P_{x,k}^2+\alpha^2}},
 \end{align}
{\tb where $\cos(\phi_k)=\frac{P_{x,k}}{\sqrt{Q_{x,k}^2+P_{x,k}^2}}\in[0,1[$ is the power factor of $k-$th converter, $\alpha=\max \left\{\frac{\mu^{2} v_{dc}^{*2}}{16\, R}, \frac{\mu\, v_{dc}^{*2}}{4\, \sqrt{Y^{-2}-1}}\right\}, \, Y=\frac{1}{2}\mu v^*_{dc} L^{-1} \sup\limits_{\tb\zeta} \norm{(j{\tb\zeta} \mathbf{I} -F)^{-1}}_2$} with the condition $Y<1$.

Additionally, assume that, 
\begin{align}
\label{eq: dc-side}
 \!\!\!\frac{\frac{\tb\mu}{\tb2}(1+\eta\, C_{dc}v_{dc}^{*} Q_{x,k}^{-1})} {\sqrt{\frac{4}{v_{dc}^{*2}}(Y^{-2}-1)-\frac{1}{\tb 4}{\tb \mu^2} v_{dc}^{*2}\,Q^{-2}_{x,k}} }< K_p.
\end{align}
\end{assumption}	
Next, we provide the main result of this section.
 \begin{lemma} 
 \label{thm: stab-lin-sys}
Consider the linearized closed-loop multi-converter model \eqref{eq: lin-sys}.
Under Assumption \ref{ass:linear-stab}, the subspace $\text{span}\{v(z^*)\}$ is asymptotically stable.
 \end{lemma}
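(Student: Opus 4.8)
The plan is to verify that the linearized multi-converter system \eqref{eq: lin-sys} satisfies the three structural Assumptions~\ref{ass:a11-matrix}, \ref{ass:v-vector}, and \ref{ass:hurwitz-cdt-} of the abstract framework, and then invoke Lemma~\ref{lem: las} directly. Assumption~\ref{ass:v-vector} is already established: we showed $J_f(z^*)\,v(z^*)=0$ with $v(z^*)=[\mathds{1}_n^\top\ 0^\top\ (\mathbf{J}x^*)^\top]^\top$, so we take $p=v(z^*)$ and identify $p_1=[\mathds{1}_n^\top\ 0^\top]^\top$, $p_2=\mathbf{J}x^*$. For Assumption~\ref{ass:a11-matrix}, I would check that $A_{11}=\left[\begin{smallmatrix} 0 & \eta\mathbf{I} \\ -C_{dc}^{-1}\nabla^2 U(\gamma^*) & -C_{dc}^{-1}K_p\mathbf{I}\end{smallmatrix}\right]$ is Hurwitz; since $\nabla^2 U(\gamma^*)=\tfrac12\mu\,\mathrm{diag}((\mathbf{J}\,\mathrm{Rot}(\gamma^*))^\top i^*)$ is diagonal with entries $Q_{x,k}/v_{dc}^*>0$ under the sign condition implicit in Assumption~\ref{ass:linear-stab}, this block decouples into $2\times 2$ blocks of the form $\left[\begin{smallmatrix}0 & \eta \\ -q_k & -K_p/C_{dc}\end{smallmatrix}\right]$ with positive trace-negative, determinant-positive, hence Hurwitz. (Here the requirement $Q_{x,k}>0$ comes out of Assumption~\ref{ass:linear-stab}.)

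\textbf{The core step} is Assumption~\ref{ass:hurwitz-cdt-}: showing that $F=A_{22}+A_{21}\mathcal{Q}_1^{-1}P_1 A_{12}$ is Hurwitz and that the associated transfer function $\mathcal{G}=\mathcal{C}(s\mathbf{I}-F)^{-1}B$ has $\mathcal{H}_\infty$-norm strictly below one, with $\mathcal{Q}_1=\mathbf{I}$, $\mathcal{Q}_2=\mathbf{I}-v_2^*v_2^{*\top}/v_2^{*\top}v_2^*$. I would first argue $A_{22}$ itself is Hurwitz — it is the state matrix of the passive RLC network (filter plus lines) with strictly positive damping $R,G,R_\ell$, so a block energy-storage Lyapunov function $\mathrm{diag}(L\mathbf{I},C\mathbf{I},L_\ell\mathbf{I})$ gives $A_{22}^\top D + D A_{22}<0$; the correction term $A_{21}\mathcal{Q}_1^{-1}P_1 A_{12}$ involves only the first block-row/column (the inductor-current rows coupling back through $\Lambda,\Xi$) and I would bound its effect, noting $P_1$ solves $\cir{\small1}$, $P_1 A_{11}+A_{11}^\top P_1=-\mathbf{I}$, so $\|P_1\|$ is controlled by the spectral gap of $A_{11}$. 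For the $\mathcal{H}_\infty$ bound, the idea is to estimate $\|\mathcal{G}\|_\infty\le \|\mathcal{C}\|_2\cdot\sup_\zeta\|(j\zeta\mathbf{I}-F)^{-1}\|_2\cdot\|B\|_2$ and to translate this product bound into the explicit parameter inequalities \eqref{eq: ac-side} and \eqref{eq: dc-side}. Concretely, $B=A_{21}$ has norm $\sim\tfrac12\mu v_{dc}^*/L$, $\mathcal{C}$ collects the coupling weights $P_{x,k},Q_{x,k}$ (the power-factor data), and the quantity $Y=\tfrac12\mu v_{dc}^* L^{-1}\sup_\zeta\|(j\zeta\mathbf{I}-F)^{-1}\|_2<1$ in Assumption~\ref{ass:linear-stab} is precisely the packaging of the resolvent bound; conditions \eqref{eq: ac-side}–\eqref{eq: dc-side} are then what make $\|\mathcal{C}\|_2\cdot\|B\|_2<1-(\text{slack})$ hold after substituting the block structure of $P_1$. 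This is where the DC-side damping $K_p$ and AC-side damping $R$ enter, mirroring the ``sufficient damping'' requirements noted in the introduction.

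\textbf{Conclusion.} Once Assumptions~\ref{ass:a11-matrix}, \ref{ass:v-vector}, \ref{ass:hurwitz-cdt-} are checked, Proposition~\ref{prop:P-matrix-} yields the unique positive definite block-diagonal $P$ in \eqref{eq:matrix-P}, Lemma~\ref{lem: q-prop} gives $\ker(\mathcal{Q}(P))=\ker(J_f(z^*))=\mathrm{span}\{v(z^*)\}$, and Lemma~\ref{lem: las} then certifies that $V(z)=z^\top(P - Ppp^\top P/p^\top Pp)z$ is a Lyapunov function establishing asymptotic stability of $\mathrm{span}\{v(z^*)\}$ for \eqref{eq: lin-sys}.

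\textbf{Main obstacle.} The hard part is not the Hurwitzness of $F$ (which follows from passivity plus a small-perturbation argument) but rather converting the abstract $\mathcal{H}_\infty$-norm condition $\|\mathcal{G}\|_\infty<1$ into the sharp, interpretable closed-form inequalities \eqref{eq: ac-side}–\eqref{eq: dc-side} in terms of only converter parameters and steady-state power factors: this requires carefully exploiting the diagonal/sparse structure of $\nabla^2 U$, $\Lambda$, $\Xi$, the explicit form of $P_1$ from the $2\times2$ Lyapunov blocks, and a judicious resolvent estimate for $F$. I would expect the bookkeeping here — tracking the constants $\alpha$, $Y$, and the power-factor terms $\cos\phi_k$ through a chain of submultiplicative norm bounds — to be the most delicate and error-prone part of the argument.
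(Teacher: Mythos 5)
Your proposal follows essentially the same route as the paper's proof: verify Assumptions~\ref{ass:a11-matrix}, \ref{ass:v-vector}, \ref{ass:hurwitz-cdt-} for the partitioned Jacobian with $p=v(z^*)$, $\mathcal{Q}_1=\mathbf{I}$, $\mathcal{Q}_2=\mathbf{I}-v_2^*v_2^{*\top}/v_2^{*\top}v_2^*$, bound $\norm{\mathcal{G}}_\infty$ by the submultiplicative product $\norm{\mathcal{C}}_2\,\sup_\zeta\norm{(j\zeta\mathbf{I}-F)^{-1}}_2\,\norm{B}_2$ with $\norm{B}_2=\tfrac{1}{2}\mu v_{dc}^*/L$, and conclude via Proposition~\ref{prop:P-matrix-}, Lemma~\ref{lem: q-prop} and Lemma~\ref{lem: las}. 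The one substantive difference is the Hurwitzness of $F$: rather than your passivity-plus-perturbation estimate, the paper makes an exact Lyapunov computation with $P_F=\mathrm{diag}(L\,\mathbf{I},C\,\mathbf{I},L_\ell\,\mathbf{I})$ in which the correction $A_{21}P_1A_{12}$ enters only the inductor-current block $\Gamma$ of $\mathcal{Q}_F$, whose positive definiteness reduces to the explicit threshold $Q_{x,k}>\mu^2 v_{dc}^{*2}/(16R)$ feeding into \eqref{eq: ac-side} --- which, together with the substitution of the closed-form $P_1$ into the $\mathcal{H}_\infty$ bound to obtain \eqref{eq: dc-side}, is exactly the ``bookkeeping'' your proposal defers.
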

 \begin{proof}
 	Since $v(z^*) \in \ker(J_f(z^*))$,  Assumption \ref{ass:v-vector} is satisfied.
 	If Condition \eqref{eq: ac-side} is true, then $r^\top(\gamma^*_k) J^\top i_k^* >0$, for all $k=1,\dots n$, the sub-matrix $A_{11}$ is Hurwitz and hence Assumption \ref{ass:a11-matrix} is also valid.
 	
 	Next, we verify  Assumption \ref{ass:hurwitz-cdt-}.
 	First, the matrix $P_1\tb=~\begin{bmatrix}
 	    P_{11} & P_{12} \\ P_{21} & P_{22}
 	\end{bmatrix}$ can be identified from specification $\small \cir{1}$ with $\mathcal{Q}_1=\textbf{I}$ by {\tb the following expressions,}
 	{\begin{subequations}
 			\begin{align*}
 			P_{11}&= \frac{1}{\eta} \left[\frac{1}{2}  K_p (\nabla^2 U(\gamma^*))^{-1} +\frac{\nabla^2 U(\gamma^*)}{2 K_p} (\textbf{I}+\eta C_{dc} (\nabla^2 U(\gamma^*))^{-1}\, )\right],\\
 			P_{12}&=P_{12}^{\top}= \frac{1}{2} (\nabla^2 U(\gamma^*))^{-1}{C_{dc}}, \\	
 			P_{22}&= \frac{C_{dc}}{2 K_p}  \left(\textbf{I}+\eta C_{dc}\, (\nabla^2 U(\gamma^*))^{-1}\right).
 			\end{align*}
 	\end{subequations}}
 	The feasibility of specification $\small \cir{2}$ with the positive semi-definite matrix $\mathcal{Q}_2=\textbf{I}-\frac{v_2^{*}v^{*\top}_2}{v^{*\top}_2v_2^{*}}$ is given by
 	{\small
 	\begin{align}
 	\label{eq: are}
 	& P_{2}\, {A_{21}A_{21}}^{\top} P_{2}+P_{2} {F}+{F}^{\top} P_{2}+ {NN}^{\top}+\mathcal{Q}_2=0\,, 
 	\end{align}
 	}%
 	where $F=A_{22}+A_{21} P_1 A_{12}$ and ${N}=A_{12}^{\top} P_1$.
 
 	If {Assumption 3 is satisfied}, then there exists a positive {\tb definite matrix} $P_2$ that satisfies the $\mathcal{H}_\infty-$ARE in \eqref{eq: are}. 
 	
 	Next, we find sufficient conditions, for which ${F}$ satisfies the Lyapunov equation ${P}_F {F}+{F}^\top{P}_F=-\mathcal{Q}_F$.  We choose $P_{F}$ and $\mathcal{Q}_{F}$ to be block-diagonal matrices 
 	$P_F=\left[\begin{smallmatrix}
 	L & 0 & 0 \\ 0 &C & 0 \\ 0 & 0 & L_{ \ell}
 	\end{smallmatrix}\right]
 	\;,\;
 	\mathcal{Q}_F=\left[\begin{smallmatrix}
 	\Gamma & 0 & 0 \\ 0 & 2G\,\textbf{I} & 0 \\ 0 & 0 & 2 R_{ \ell} \textbf{I}
 	\end{smallmatrix}\right]\,,
 	$
	with $\Gamma=2R\,\textbf{I}+C_{dc}^{-1}\left(\Xi(\gamma^*)P_{12} \Lambda(\gamma^*)^{\top}+\Lambda(\gamma^*) P_{12}\Xi(\gamma^*)^{\top}\right)+2 C_{dc}^{-1} \left(\Lambda(\gamma^*) P_{22} \Lambda(\gamma^*)^{\top}\right)$ being itself block-diagonal. Aside from $\Gamma$, all diagonal blocks of $P_{F}$ and $Q_{F}$ are positive definite. We evaluate the block-diagonal matrix $\Gamma$ for positive definiteness by exploring its two-by-two block diagonals, where trace and determinant of each block are positive under  $Q_{x,k}^*=\frac{1}{2}v_{dc}^* \mu( r(\gamma^*_k))^\top {J}^\top i_k^*>\frac{\mu^{2} v_{dc}^{*2}}{16\, R}$,


Furthermore, we impose the condition $\norm{\mathcal{G}}_{\infty}<1$, by equivalently setting $\sup\limits_{\tb\zeta\in\real}\norm{\mathcal{C}\,(j {\tb\zeta} \mathbf{I}-F)^{-1} B}_2<1$, where $ \mathcal{C}={\left(A_{12}^\top P^\top_1 P_1 A_{12}+\mathbf{I}-\frac{(\mathbf{J} x^*)(\mathbf{J} x^*)^\top}{(\mathbf{J} x^*)^\top(\mathbf{J} x^*)}\right)^{1/2}},\, B=A_{21}$. 
	It is sufficient to consider $\norm{\mathcal{C}}^2_2< (\sup\limits_{\zeta\in\real}\norm{(j {\tb\zeta} \mathbf{I}-F)^{-1}}_2 \norm{B}_2)^{-2}$. Using the triangle inequality for the 2-norm, it holds that $\norm{\mathcal{C}}^2_2\leq \norm{A_{12}^\top P^\top_1 P_1 A_{12}}_2+\norm{\mathcal{Q}_2}_2$. Since $\norm{\mathcal{Q}_2}_2=1$, we consider instead,
	$$\norm{A_{12}^\top P^\top_1 P_1 A_{12}}_2\leq  (\sup\limits_{\tb\zeta}\norm{(j{\tb\zeta} I-F)^{-1}}_2 \norm{B}_2)^{-2}-1.$$
	From $\norm{B}^2_2=\norm{A_{21}}^2_2=L^{-2}\norm{\begin{bmatrix}
	\Xi^\top \Xi & \Xi^\top \Lambda \\  \Lambda^\top\Xi & \Lambda^\top \Lambda 
	\end{bmatrix}}_2=L^{-2}\norm{\begin{bmatrix}
	\Xi^\top \Xi & 0  \\  0 & \Lambda^\top \Lambda
	\end{bmatrix}}_2=L^{-2} (\frac{1}{2}\mu v^*_{dc})^2$ because $v_{dc}^*\geq1$.
	
{\tb Define $Y=\frac{1}{2}L^{-1}\mu v^*_{dc}\sup\limits_{\tb\zeta} \norm{({j\,\tb\zeta} \mathbf{I} -F)^{-1}}_2$. 
For $Y<1$,} straightforward calculations show that $\norm{A_{12}^\top P^\top_1 P_1 A_{12}}_2=\overline\sigma(C_{dc}^{-2}\Lambda(\gamma^*) (P^2_{12}+P^2_{22})\Lambda(\gamma^*)^\top)=\max\limits_{k=1,\dots, n} d_k\, \overline\sigma(r(\gamma_k^*) r^\top(\gamma_k^*)))<\frac{4}{v_{dc}^{*2}}(Y^{-2}-1)$, with $d_k=(r_k^\top(\gamma_k^*) {J}^\top i_k^*)^{-2}+\frac{1}{4 K_p^2} \left(1+\eta C_{dc}  (\frac{1}{2} \mu r^\top(\gamma^*_k) {J}^\top i^*_{k})^{-1}\right)^2$.
	
Under $4/v_{dc}^{*2}\,(Y^{-2}-1)-\max\limits_{k=1,\dots, n} (r_k^\top(\gamma^*_k) {J}^\top i^*_{k})^{-2}> 0$, we solve for the gain $K_p$ with $\sigma_k=\overline\sigma(r(\gamma_k^*)r^\top(\gamma_k^*))=1$, to find
	
\begin{align*}
\sqrt{\frac{\max\limits_{k=1,\dots,n} \frac{\mu^2}{4} (1+\eta C_{dc}(\frac{1}{2}  \mu\, r^\top(\gamma^*_k)\, {J}^\top i^*_{k})^{-1})^2} {\frac{4}{v_{dc}^{*2}}(Y^{-2}-1) -\max\limits_{k=1,\dots, n} (r^\top(\gamma^*_k)\, {J}^\top i^*_{k})^{-2}}}< K_p.
\end{align*}
This can be simplified into \eqref{eq: dc-side}. The condition $4/v_{dc}^{*2}\,(Y^{-2}-1)-\max\limits_{k=1,\dots, n} (r_k^\top(\gamma^*_k) {J}^\top i^*_{k})^{-2}> 0$ can be written as $ Q^{\tb2}_{x,k}> \frac{\mu^2 v^{*4}_{dc}}{16 (Y^{-2}-1)}$,
 under the condition that $Y<1$ and we deduce that,
 {\tb  \begin{align*}
 	\max \left\{\frac{\mu^{2} v_{dc}^{*2}}{16\, R}, \tb\frac{\mu\, v_{dc}^{*2}}{4\, \sqrt{Y^{-2}-1}}\right\}<Q_{x,k}.  
 	\end{align*}
From the definition of the power factor $\cos(\phi_k)=\frac{P_{x,k}}{\sqrt{Q_{x,k}^2+P_{x,k}^2}}$, we arrive at \eqref{eq: ac-side}. 
In summary, we arrive at the sufficient conditions \eqref{eq: dc-side} and \eqref{eq: ac-side}.}
By applying Theorem \ref{thm: las}, we deduce that $\text{span}\{v(x^*)\}$ is asymptotically stable for the linearized system~\eqref{eq: lin-sys}.
	 \end{proof}

 \subsection{Results contextualization}
\label{subsec: contextualizing}
{\tb In what follows, we discuss Assumption~\ref{ass:linear-stab}.}
Generally speaking, Condition \eqref{eq: ac-side} can be regarded as a condition on the AC side, whereas \eqref{eq: dc-side} is a condition on the DC side control.
{\tb Both conditions are sufficient for stability and can be evaluated in a centralized fashion.}

{\tb Condition \eqref{eq: ac-side} connects the efficiency of the converter given by the power factor that defines the amount of current producing useful work 
to the lower bound $\alpha>0$. From \eqref{eq: ac-side}, the power factor may approaches 1, as $\alpha\to 0$.
}
If $\max \left\{\frac{\mu^{2} v_{dc}^{*2}}{16\, R}, \tb\frac{\mu v_{dc}^{*2}}{4\, \sqrt{Y^{-2}-1}}\right\}=\frac{\mu^{2} v_{dc}^{*2}}{16\, R}$, then condition \eqref{eq: ac-side} depends on the converter's resistance $R$, modulation amplitude $\mu$, nominal DC voltage $v_{dc}^*$, and the steady state current $i^*$. This is a known practical stability condition \cite{wang2014virtual}. 
In fact from \eqref{eq: ac-side},  sufficient resistive damping is often enforced by {\it virtual impedance control} which makes $\alpha\to 0$. 

If $\max \left\{\frac{\mu^{2} v_{dc}^{*2}}{16\, R}, \tb\frac{\mu v_{dc}^{*2}}{4\, \sqrt{Y^{-2}-1}}\right\}=\tb\frac{\mu v_{dc}^{*2}}{4\, \sqrt{Y^{-2}-1}}$, then we can again deploy $\mathcal{H}_\infty$ control to make {\tb $\norm{G_{ac}}_\infty$ arbitrarily small and thus $\alpha\to 0$}.
We note that, the AC side feedback control is crucial to achieve desired steady states for our power system model \eqref{eq: non-lin}. This can be implemented e.g., via outer loops that take measurements from the AC side and using the classical vector control architecture for the regulation of the inductance current and the output capacitor voltage \cite{d2015virtual}. 	
 
The condition $Y<1$ translates into the requirement that, $$\norm{G_{ac}}_\infty<\beta,\; \beta= \frac{2\,L}{\mu v^*_{dc}},$$ where $G_{ac}(j\tb\zeta)=(j{\tb\zeta} \mathbf{I}-F)^{-1}$ asks for $\mathcal{L}_2$ gain from the disturbances on the AC side to AC signals to be less than $\beta$. This can be achieved via $\mathcal{H}_\infty$ control \cite{zhou1996robust}.

Condition \eqref{eq: dc-side} depends on the steady state angles $\gamma^*$ and the converter and network parameters and asks for damping as the case for other stability conditions obtained in the literature on the study of synchronous machines \cite{arghir2018grid, dorfler2012synchronization}. The smaller is the {\em synchronization} gain $\eta>0$, the larger is the operating range of the DC damping gain $\widehat K_p$. 

{\tb For a more general setting with heterogeneous converters and  transmission lines parameters, our stability analysis can be  applied and analogous sufficient conditions to  \eqref{eq: ac-side} and \eqref{eq: dc-side} can be derived. 
}
}

\section{Simulations}
\label{sec: sims}

\begin{figure}[ht!]
	\centering
	\includegraphics[scale=0.27, trim= 1.5cm 0 0 0 ]{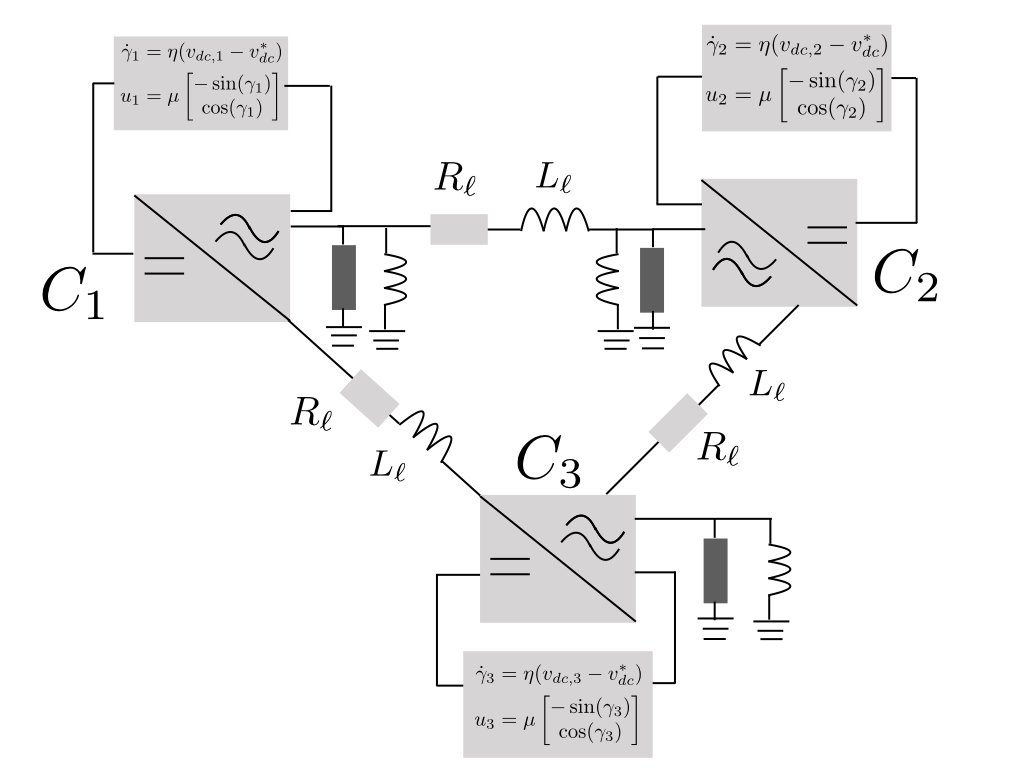}
	\caption{Three converter setup with system dynamics {\tb described by  \eqref{eq: multi-node}}, composed of identical three-phase converters $C_1, C_2$ and $C_3$ in closed-loop with the matching control and interconnected via identical RL lines. \tb The internal dynamics of the converters are modeled according to Figure \ref{fig:circuit diagram}.}
	\label{fig:my_label}
\end{figure}

{\tb The goal of this section is to  assess the asymptotic {\tb stability} of the trajectories of  the nonlinear power system \eqref{eq: non-lin} in Theorem \ref{thm: las} locally, i.e., by numerically estimating the region of attraction $\mathcal{D}$ in the neighborhood of $\mathcal{S}(z^*)$ for ${\tb z^*=[\gamma^{*\top},0^\top, v^{*\top}_c,i^{*\top}_\ell ]^\top}$}.

Let us consider three identical DC/AC converter model in closed-loop with the matching control depicted in Figure \ref{fig:my_label} and connected via three identical RL lines, as in \eqref{eq: non-lin} and connected to an inductive  and resistive load. Table \ref{table_example} shows the converter parameters and their controls  {\tb (in S.I.)}.

First, we  start by verifying the parametric conditions established in Assumption \ref{ass:linear-stab} via \eqref{eq: ac-side} and \eqref{eq: dc-side}. We tune the filter resistance $R>0$ (e.g. using virtual impedance control) so that \eqref{eq: ac-side} is satisfied. Next we choose the DC side gain $K_p>0$ so that \eqref{eq: dc-side} is satisfied.

Second, we numerically estimate the region of {\tb attraction} $\mathcal{D}$ of $\mathcal{S}(z^*)$ in the angle or $\gamma-$ space  by initializing sample trajectories of the angles  depicted in Figure \ref{fig:my_label}  at various locations  and illustrate the evolution of the nonlinear angle trajectories of \eqref{eq: non-lin} to estimate the region of attraction $\mathcal{D}$.
As predicted by Theorem \ref{thm: las}, we observe that the set $\mathcal{S}(z^*)$ restricted to the angles (relative to their steady state) space, and represented by $\textrm{span}\{\mathds{1}_3\}$ is asymptotically stable for the sampled angle trajectories of \eqref{eq: non-lin}.

{\tb Figure~\ref{fig: roc} depicts a projection onto the relative $(\gamma_1, \gamma_2, \gamma_3)-$ space of the estimate  of $\mathcal{D}$ (in rad). The convergence of angle solutions to the subspace $\mathds{1}_3$ is guaranteed for initial conditions at distance of $3.1$ resulting from varying the initial angles, while keeping the remaining initial states fixed. In particular, DC voltages and AC currents are also initialized close to their steady state values, as shown in Figure \ref{fig: voltages}. Our simulations show that the DC capacitor voltage $v_{dc}$ in Figures \ref{fig: voltages} and the AC output capacitor voltage in $abc-$ frame, namely $v$ converge to a corresponding steady state. This validates our theoretical results from Section \ref{sec: local-asymptotic stability}.
}

\begin{figure}[h!]
	\centering
	\includegraphics[scale=0.45]{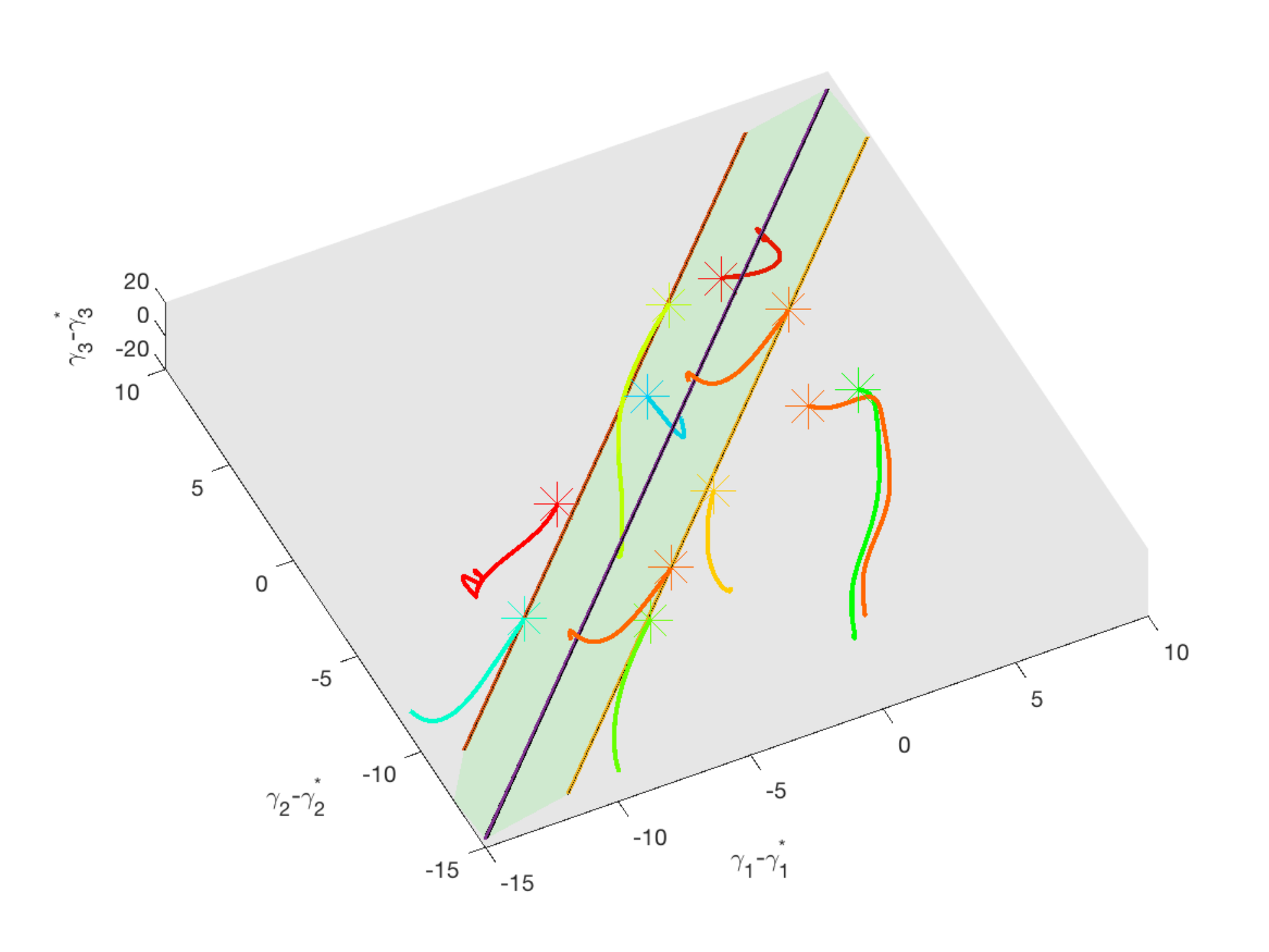}
	\caption{A plot of the region of {\tb attraction} $\mathcal{D}$  and the steady state set $\mathcal{S}(z^*)$ restricted to $( \gamma_1-\gamma_1^*,\gamma_2-\gamma_2^*,\gamma_3-\gamma_3^*)-$ space of the three DC/AC converter angles and convergence of the sample angle trajectories of \eqref{eq: non-lin} to the subspace $\mathds{1}_3$ within a distance of $d=3.1$. This results from varying the initial angles, while keeping the remaining initial states fixed. A sample of angles deviations initialized within the green area and denoted by different stars converge towards the stable set, while some angle trajectories initialized outside the estimated region are divergent. All the angles are represented in rad.}
	\label{fig: roc}
\end{figure}

\begin{figure}[h!]
	\centering
	\includegraphics[scale=0.5]{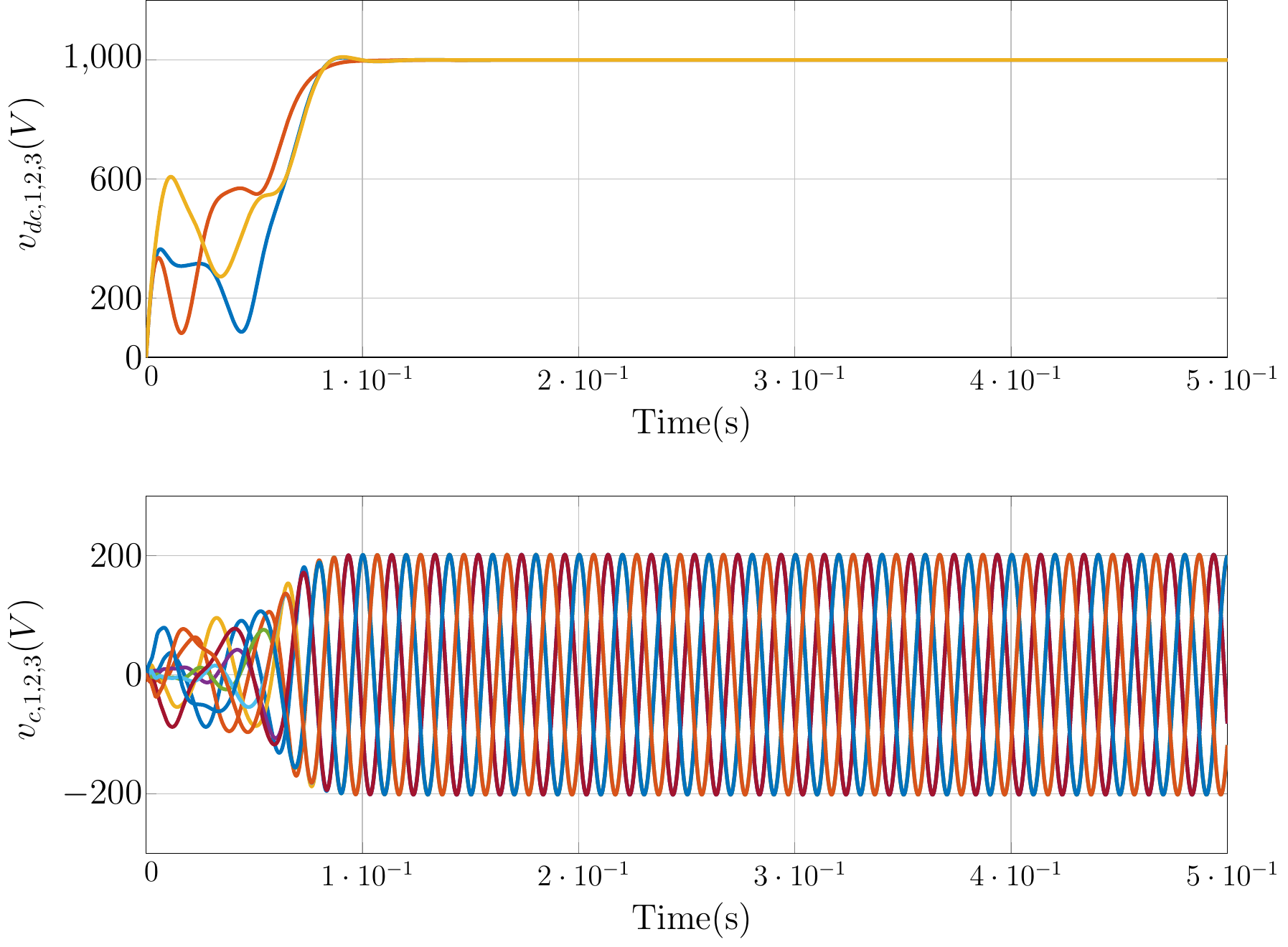}
	\caption{\tb Synchronization of DC capacitor voltages corresponds to frequency synchronization at the desired value. Hereby the angles are initialized at $(-6,-2,-13.15)$ (in rad) and belong to the projected  region of attraction shown in green in Figure \ref{fig: roc}.  The output capacitor voltage $v_c$ in $(abc)$ frame converges to a sinusoidal steady state $v^*_c$ as shown in Figure~\ref{fig: roc}.}
	\label{fig: voltages}
\end{figure}


{\tb For completeness, we have also illustrated a projection of the level sets of the Lyapunov function given in \eqref{eq:LF1}  of an example network consisting of {\em two} DC/AC converters interconnected via RL line that have the same dynamics as in \eqref{eq: non-lin} into $(\gamma_{1}-\gamma_1^*, \gamma_2-\gamma_2^*)-$ space in Figure \ref{fig: LF_sets}. The parameter values can be taken from Table \ref{table_example}. We set}
$
v(z^*)=\begin{bmatrix}
v_1^{\top}(z^*) & v_2^{\top}(z^*)
\end{bmatrix}^{\top}\in\ker(A(z^*))$, 
$v_1(z^*)=\left[\begin{smallmatrix}
0.043, & 0.043, & 0, & 0 
\end{smallmatrix}\right]^{\top}$,
and
$v_2(z^*)=\left[\begin{smallmatrix}
-0.0033&
-0.0023&
-0.0033&
-0.0023&
-0.7034&
-0.0108&
-0.7034&
-0.0108&
0 &
0
\end{smallmatrix}\right]^{\top}$
for a corresponding matrix $P>0$ as defined in \eqref{eq:matrix-P}. The function $V(x)$ takes positive values everywhere and is zero on the subspace spanned by $v(x^*)$.

\begin{figure}[ht!]
\centering{
\includegraphics[scale=0.51, trim= 1.5cm 5cm 0 3.5cm]{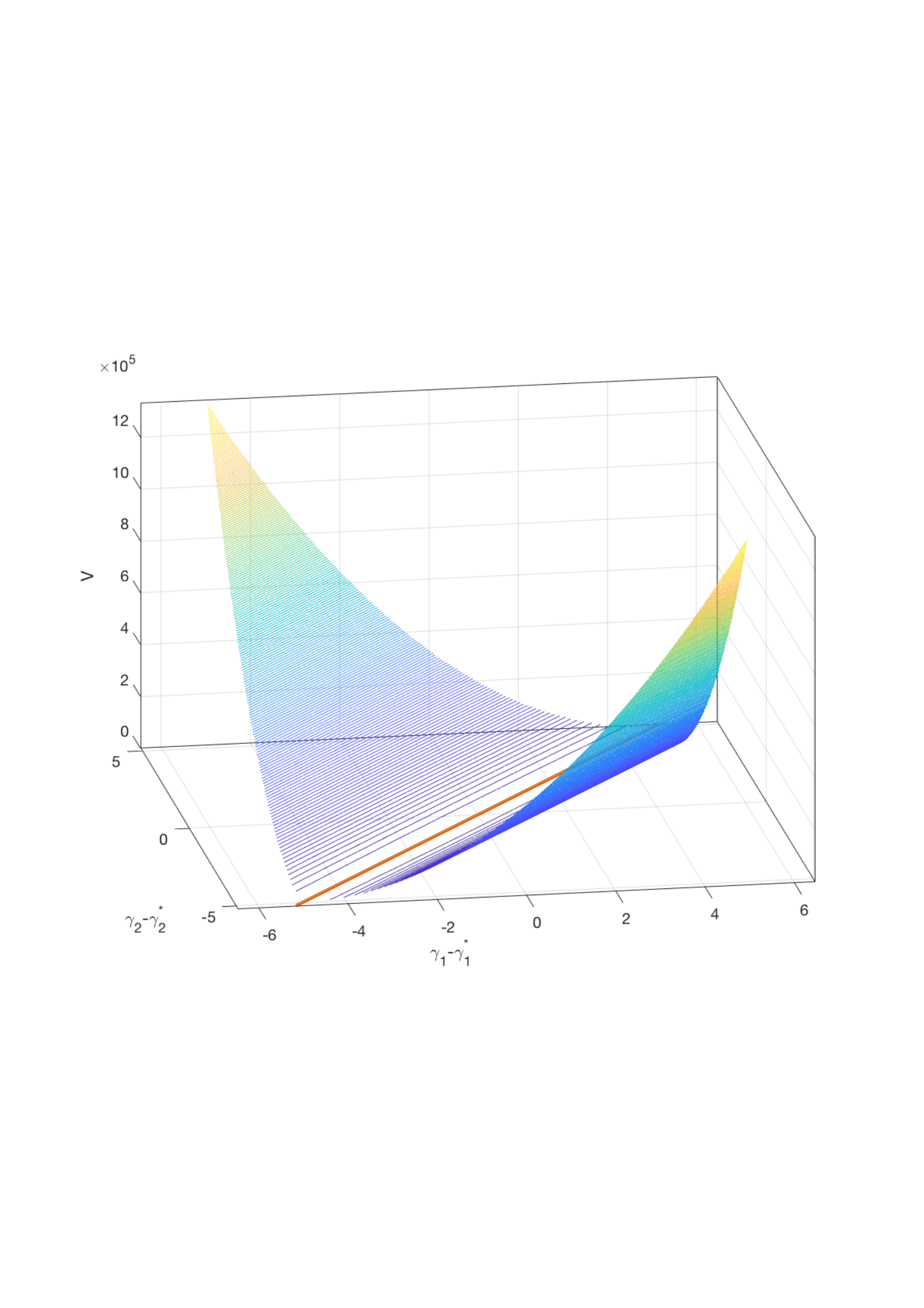}
\caption{\tb 3D representation of the Lyapunov function $V(x)$ in \eqref{eq:LF1} for {\em two} DC/AC converters in closed-loop with the matching control and connected via an RL line in \eqref{eq: multi-node} after a projection into $(\gamma_1-\gamma_{1}^*,\gamma_2-\gamma_2^*)$ space for $P>0$ as in \eqref{eq:matrix-P} and the subspace spanning $v(x^*)$. The parameter values can be found in Table~\ref{table_example}.}
\label{fig: LF_sets}}
\end{figure}

\begin{table}[h!]
	\begin{center}
		\begin{tabular}{|l||l|l|}
			\hline
			& $C_i,\; i=\{1,2,3\}$ & RL Lines\\
			\hline\hline
			$i_{dc}^*$  & $ 16.5$ & --\\
			$v_{dc}^*$ & $1000$& -- \\
			$C_{dc}$ & $10^{-3}$& --\\
			$G_{dc}$ & $10^{-5} $ & --\\
			$ K_P$& 0.099 &-- \\
			$\eta$ & 0.0003142 & -- \\
			$\mu$ & 0.33 &-- \\
			$L$ & $5\cdot10^{-4}$ &-- \\
			$C$ & $10^{-5}$ &-- \\
			$G$ & 0.1 & -- \\ 
			$R_{}$ & 0.2 &-- \\
			$R_{ \ell}$ &--  & $0.2$ \\
			$L_{ \ell}$ & -- & $5\cdot 10^{-5}$\\
			\hline
		\end{tabular}
	\end{center}
	\caption{Parameter values of DC/AC converters and the RL lines (in S.I).}
	\label{table_example}
\end{table}

\section{CONCLUSIONS}
We investigated the characteristics of a high-order steady state manifold of a multi-converter power system, by exploiting the symmetry of the vector field.  We studied local {\tb asymptotic stability of} the steady state set {\tb as a direct application of the center manifold theory} and provided an operating range for the control gains and parameters. Future directions include finding better estimates of the region of {\tb attraction} using advanced numerical methods and more {\tb detailed} simulations of high-order power system models.

{\section*{ACKNOWLEDGMENT}
The authors would like to kindly thank Florian Dörfler, Anders Rantzer, Richard Pates and Mohammed Deghat for the insightful and important discussions.}




\bibliographystyle{IEEEtran}
\bibliography{bib/root}

\end{document}